\newtheorem{thm}{Theorem}
\newtheorem{lem}{Lemma}
\newtheorem{conj}{Conjecture}
\newcommand{\Z}{{\Z B}}
\newcommand{\Zekhaya}[1]{{\color{red} #1}}
\let\oldenumerate\enumerate
\renewcommand{\enumerate}{
  \oldenumerate
  \setlength{\itemsep}{0pt}
  \setlength{\parskip}{0pt}
  \setlength{\parsep}{0pt}
}
\def\vertex(#1){\put(#1){\circle*{2}}}
\def\vertexo(#1){\put(#1){\circle{2}}}
\def\vert(#1){\put(#1){\circle*{1.5}}}
\def\verto(#1){\put(#1){\circle{1.5}}}
\def\lab(#1)#2{\put(#1){\makebox(0,0)[c]{#2}}}
\begin{document}

\title{ The number of $1$-nearly independent edge subsets}

\author{Eric O. D. Andriantiana \thanks{We would like to thank the South African Department of Higher Education and Training’s Future Professors Programme Phase 01 for their financial support for E. O. D Andriantiana.} \\
	Department of Mathematics (Pure and Applied) \\
	Rhodes University \\
	Makhanda, 6140 South Africa\\
	\small \tt Email: e.andriantiana@ru.ac.za \\\\Zekhaya B. Shozi \thanks{Research supported by the National Graduate Academy for Mathematical and Statistical Sciences (NGA-MaSS). Grant number: PA22023.}\\
	School of Mathematics, Statistics and Computer Science\\
	University of KwaZulu-Natal\\
	Durban, 4000 South Africa\\
\small \tt Email: zekhaya@aims.ac.za
}

\date{}
\maketitle

\begin{abstract}

Let $G=(V(G),E(G))$ be a graph with set of vertices $V(G)$ and set of edges $E(G)$. A subset $S$ of $E(G)$ is called a $k$-nearly independent edge subsets if there are exactly $k$ pairs of elements of $S$ that share a common end. $Z_k(G)$ is the number of such subsets. 
This paper studies $Z_1$. Various properties of $Z_1$ are discussed. We characterise the two $n$-vertex trees with smallest $Z_1$, as well as the one with largest value. A conjecture on the $n$-vertex tree with second-largest $Z_1$ is proposed.

 \end{abstract}

{\small \textbf{Keywords:} $1$-nearly independent edge subset; Minimal graphs; Maximal graphs. } \\
\indent {\small \textbf{AMS subject classification:} 05C69}
\newpage
	
\section{Introduction}

A \emph{simple} and \emph{undirected graph} $G$ is an ordered pair of sets $G=(V(G),E(G))$, where $V(G)$ is a 
set of objects called \emph{vertices}, and $E(G)$ is a (possibly empty) set of unordered pairs of elements of $V(G)$ called \emph{edges}. The \emph{order} and \emph{size} of $G$ is $|V(G)|$ and $|E(G)|$, respectively. For simplicity, we write $|G|$ instead of $|V(G)|$. For graph theory notation and terminology, we generally follow~\cite{henning2013total}.

An \emph{independent edge subset} of a graph $G=(V(G),E(G))$ is a subset $I$ of $V(G)$ with the property that if $e_1$ and $e_2$ are two edges in $I$, then $e_1$ and $e_2$ are not adjacent in $G$; that is, $e_1$ and $e_2$ do not share a common end in $G$. Similarly, as already defined in \cite{andriantiana2023number}, $\sigma_1(G)$ counts the number of independent subsets $S$ of $V(G)$ such that the subgraph induced by $S$ in $G$ contains only one edge.  The number of independent edge subsets of a graph has been extensively studied in the literature. See the survey  \cite{wagner2010maxima}, where it is called the \emph{Hosoya index}. The name Hosoya comes from the Japanese chemist, Haruo Hosoya \cite{hosoya1971topological} who was the first person to introduce this index in 1971. He showed that there is a correlation between the boiling points of paraffins (saturated hydrocarbons) and the Hosoya index. 

In a series of papers \cite{gao1988topological, gutman2004concealed, gutman1976topological, hosoya1972graphical, hosoya1976topological, hosoya1975topological}, the application of the Hosoya index was revealed in showing the structure-dependence of the total $\pi$-electron energy of chemical molecules. This boosted the interest of many mathematicians to study the number of independent edge subsets. Established results include classes of graphs that contain elements that are not molecular graphs. In \cite{andriantiana2013energy}, the family of trees with a given degree sequence is studied, and the element which has the minimum number of independent edge subsets is fully characterised.  The result implies as corollaries characterisations of trees with smallest
number of independent edge subsets in various other classes like trees with fixed order, or with fixed order and given maximum degree.

This paper intends to propose a generalisation of the number of independent edge subsets. For an integer $k\ge 1$, we define a \emph{$k$-nearly independent edge subset} of a graph $G$ with vertex set $V(G)$ and edge set $E(G)$ as a subset $I$ of $E(G)$ that contains exactly $k$ pairs of adjacent edges. Denote by $Z_k(G)$ the number of $k$-nearly independent edge subsets of $G$. $Z_0(G)$ is the number of independent edge subsets of $G$. The main focus of this paper is to study $Z_1$. For the classes of graphs we investigated, the behavior of $Z_1$ seems to have a lot in common with that of $Z_0$. Among all trees of order $n$, the star $K_{1,n-1}$ minimises both $Z_0$ and $Z_1$, while the path $P_n$ maximises both $Z_0$ and $Z_1$. In \cite{andriantiana2023number} similar comparison made between $\sigma_1$ and the usual number of independent vertex subsets shows considerable difference.

The rest of the paper is structured as follows. Section \ref{Sec:Prel} is a preliminary, where we present basic useful facts about $Z_1$. There, we discuss the effect of adding or removing an edge, we provide recursive formulas for $Z_1$ as well as an explicit formula for $Z_1$ of paths. These are used in Section \ref{Sec:Min} to characterise the two trees with order $n$ and smallest $Z_1.$ In Section \ref{Sec:Max}, we proved that the path $P_n$ is the forest of order $n$ that has the largest $Z_1$. A conjecture on the forest with second-largest $z_1$ is also provided there. 


\section{Preliminary}
\label{Sec:Prel}
This section is made of a few technical tools that will be needed in other sections. Since we are introducing $Z_1$, we also include some properties that we do not use much, but we expect to be useful for further studies of $Z_1$. 

Let $G$ be a graph with vertex set $V(G)$, edge set $E(G)$, order $n = |V(G)|$ and size $m = |E(G)|$. We denote the degree of a vertex $v$ in $G$ by $\deg_G(v)$. For a subset $S$ of vertices of a graph $G$, we denote by $G - S$ the graph obtained from $G$ by deleting the vertices in $S$ and all edges incident to them. If $S = \{v\}$, then we simply write $G - v$ rather than $G - \{v\}$.

For positive integers $r$ and $s$, we denote by $K_{r,s}$ the complete bipartite graph with partite sets $X$ and $Y$ such that $|X|=r$ and $|Y|=s$. A complete bipartite graph $K_{1,n-1}$ is also called a \emph{star}. 
We use the typical notations $P_n, C_n, $ and $K_n$ 
for the path of order $n$, the cycle of order $n$ and the complete graph of order $n$, 
respectively.

\subsection{Effect of an edge or vertex removal or addition}
\label{Sub:REmE}
Suppose that $u$ and $v$ are vertices not adjacent in a graph $G$. If $H=G+uv$ is the graph obtained from $G$ by adding the edge $uv$, then $Z_1(H)\geq Z_1(G)$. Adding the edge $uv$ does not affect the adjacency of the already existing edges. The inequality is strict if and only if $G-u-v$ contains a $P_3$ or at least one of $u$ and $v$ is not isolated. These are the only situations where the newly added edge is contained in at least one new set of $1$-nearly independent edges.   For small graphs like $2K_1$ (two vertices with no edge), it is possible that the newly added edge is not contained in a $1$-nearly independent edge subset. In this case $Z_1$ will not increase. 
It follows from this that for any graph $G$ with $n$ vertices, we have $$Z_1(G)\leq Z_1(K_n)$$ with equality only if the two compared graphs are the same or $n<3$. The edgeless graph has the smallest $Z_1$ which is $0$.

An isolated vertex does not affect the value of $Z_1$. If $\deg_G(v)=0$, then $Z_1(G)=Z_1(G-v).$ If $\deg_G(v)\geq 2$, then $Z_1(G)>Z_1(G-v)$. In this case, at least one possibility of a $P_3$ subtree of $G$ is lost and hence at least one $1$-nearly independent edge subsets. We still have $Z_1(G)>Z_1(G-v)$ if we remove a vertex $v$ of degree $1$ that is attached to a vertex of degree at least $2$, as we then lose at least one $P_3$ containing $v$. If $v$ is contained in $P_2$ component of $G$, then $Z_1(G)\geq Z_1(G-v)$, with strict inequality if $G-N_G[v]$ contains a $P_3$. 
\subsection{Relation between $\sigma_1$ and $Z_1$}
The line graph $L(G)$ of $G$ is the graph with set of vertices $E(G)$, and such that two different elements  $e$ and $e'$ of $E(G)$
are adjacent in $L(G)$ if they have a common end in $G$. We state, without proof, the following  straightforward lemma.
\begin{lem}
\label{Lem:GtoLG}
For any graph $G$, we have 
$Z_1(G)=\sigma_1(L(G))$.
\end{lem}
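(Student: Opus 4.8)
The plan is to exploit the definition of the line graph directly, since the claimed identity is essentially a reformulation rather than a substantive computation. First I would record the key bookkeeping fact that, by construction, $V(L(G)) = E(G)$. Thus a subset $S \subseteq E(G)$ is literally the same object as a subset $S \subseteq V(L(G))$, and no bijection needs to be built beyond this identification; I only need to check that the two quantities being compared count the same family of such sets.

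The crux is translating the phrase ``pair of adjacent edges'' into the language of $L(G)$. By the definition of the line graph, two distinct elements $e, e' \in E(G)$ share a common end in $G$ precisely when $ee' \in E(L(G))$. Hence, for any fixed $S \subseteq E(G)$, the pairs of adjacent edges contained in $S$ correspond bijectively to the edges of the induced subgraph $L(G)[S]$. In particular, the number of adjacent pairs inside $S$ equals $|E(L(G)[S])|$, the number of edges in the subgraph of $L(G)$ induced by $S$.

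With this adjacency dictionary in place, the lemma follows by counting. A subset $S \subseteq E(G)$ is a $1$-nearly independent edge subset if and only if it contains exactly one pair of adjacent edges, that is, if and only if $|E(L(G)[S])| = 1$. Read inside $L(G)$, this last condition says exactly that $S$ is one of the vertex subsets whose induced subgraph has a single edge, i.e.\ one of the sets counted by $\sigma_1(L(G))$. Summing over all such $S$ then yields $Z_1(G) = \sigma_1(L(G))$.

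I expect no genuine obstacle here, which is consistent with the paper flagging the statement as straightforward and stating it without proof. The only point requiring a little care is making sure the ``exactly one pair of adjacent edges'' condition defining $Z_1$ matches the ``induced subgraph contains exactly one edge'' condition defining $\sigma_1$ under the identification $V(L(G)) = E(G)$; once the correspondence between adjacent edge-pairs of $G$ and edges of $L(G)$ is stated cleanly, both sides manifestly enumerate the same collection of sets.
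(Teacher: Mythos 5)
Your proof is correct: identifying $V(L(G))$ with $E(G)$ and observing that pairs of adjacent edges of $G$ correspond exactly to edges of $L(G)$, so that a set with exactly one adjacent pair is exactly a vertex subset of $L(G)$ inducing a single edge, is precisely the intended argument. The paper states this lemma without proof, flagging it as straightforward, and your write-up supplies that straightforward argument with the right care about matching the two defining conditions.
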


\subsection{Explicit formulas for $Z_1$ of some graphs}

It is convenient to set $Z_1(P_t)=0$ whenever $t\leq 2$ and $Z_0(P_t)=1$ whenever $t\leq 1$.
Let $\alpha = \frac{1+\sqrt{5}}{2}$ and $\beta = \frac{1-\sqrt{5}}{2}$. Then, we have 
    $\alpha + \beta =1$,
     $\alpha - \beta = \sqrt{5}$,
     $\alpha\cdot \beta = -1$.
The following formulas are well known, see for example \cite{andriantiana2010number} and \cite{wagner2010maxima}. 

\begin{thm}[cf. \cite{andriantiana2010number}]
\label{thm:sigma-of-a-path-by-eric}
    For $n \in \mathbb{N}$, 
    we have
    \begin{align}
    \label{eq:sigma-of-a-path-by-eric}
        Z_0(P_n) = \frac{1}{\sqrt{5}} \left( \alpha^{n+1} - \beta^{n+1} \right).
    \end{align}
\end{thm}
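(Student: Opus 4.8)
The plan is to establish the Fibonacci-type recurrence satisfied by $Z_0(P_n)$ and then recognise the claimed closed form as its solution. First I would fix an ordering $v_1 v_2 \cdots v_n$ of the path, with edges $e_i = v_i v_{i+1}$, and condition on the last edge $e_{n-1}$. If $e_{n-1}$ is excluded from an independent edge subset $I$, then $I$ is exactly an independent edge subset of $P_{n-1} = v_1 \cdots v_{n-1}$; if $e_{n-1} \in I$, then $e_{n-2}$ must be excluded (it shares the vertex $v_{n-1}$ with $e_{n-1}$), so $I \setminus \{e_{n-1}\}$ ranges over the independent edge subsets of $P_{n-2} = v_1 \cdots v_{n-2}$. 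This bijective case split gives
\begin{align}
Z_0(P_n) = Z_0(P_{n-1}) + Z_0(P_{n-2}) \qquad (n \ge 2),
\end{align}
and the adopted convention $Z_0(P_t) = 1$ for $t \le 1$ is precisely what makes the recurrence valid down to $n = 2$ (for instance $Z_0(P_2) = Z_0(P_1) + Z_0(P_0) = 2$, matching the two matchings of $P_2$).

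Second, I would solve this linear recurrence in the standard way. Its characteristic equation is $x^2 = x + 1$, whose roots are exactly $\alpha$ and $\beta$; hence $Z_0(P_n) = A\alpha^n + B\beta^n$ for suitable constants $A, B$. Imposing the initial values $Z_0(P_0) = Z_0(P_1) = 1$ yields the system $A + B = 1$ and $A\alpha + B\beta = 1$. Using the identities $1 - \beta = \alpha$ and $\alpha - \beta = \sqrt{5}$ recorded just before the theorem, this solves to $A = \alpha/\sqrt{5}$ and $B = -\beta/\sqrt{5}$, and substituting back produces
\begin{align}
Z_0(P_n) = \frac{1}{\sqrt{5}}\left(\alpha^{n+1} - \beta^{n+1}\right),
\end{align}
as claimed. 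Alternatively, one may bypass the constant-fitting and simply verify by induction that the stated right-hand side satisfies the recurrence, using $\alpha^2 = \alpha + 1$ and $\beta^2 = \beta + 1$, together with agreement at $n = 0, 1$.

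The computation is entirely routine, and there is no genuine obstacle; the only point requiring attention is the bookkeeping at small indices. Concretely, I would check that the closed form is consistent with the boundary convention: the formula gives $Z_0(P_0) = (\alpha - \beta)/\sqrt{5} = 1$ and $Z_0(P_1) = (\alpha^2 - \beta^2)/\sqrt{5} = \alpha + \beta = 1$, exactly matching the convention $Z_0(P_t) = 1$ for $t \le 1$. This boundary verification, rather than any conceptual difficulty, is the step where a sign or index slip could creep in, which is why the identity can otherwise be quoted as well known.
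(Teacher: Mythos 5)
Your proof is correct, but there is nothing in the paper to compare it against: the paper states this result as a known fact (``cf.~\cite{andriantiana2010number}'', ``The following formulas are well known'') and gives no proof of its own. Your argument is the standard self-contained derivation: the case split on the last edge $e_{n-1}$ gives the Fibonacci recurrence $Z_0(P_n)=Z_0(P_{n-1})+Z_0(P_{n-2})$ for $n\ge 2$ (the convention $Z_0(P_t)=1$ for $t\le 1$ indeed makes this valid down to $n=2$), the characteristic equation $x^2=x+1$ has roots $\alpha,\beta$, and your constants are right: from $A+B=1$ and $A\alpha+B\beta=1$ one gets $A(\alpha-\beta)=1-\beta=\alpha$, hence $A=\alpha/\sqrt{5}$ and $B=1-A=-\beta/\sqrt{5}$, which assembles into $Z_0(P_n)=\frac{1}{\sqrt{5}}\left(\alpha^{n+1}-\beta^{n+1}\right)$. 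The boundary checks at $n=0,1$ also match the paper's conventions. What your write-up buys, relative to the paper, is a complete proof of a statement the paper merely cites; the only cosmetic caveat is that for $n=2$ the edge $e_{n-2}$ does not exist, so the inclusion case should be read as degenerating to the matchings of $P_0$, which is exactly how you use it.
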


\begin{thm}[cf. \cite{wagner2010maxima}]
    \label{thm:sigma-union-is-product-of-sigmas}
    If $G_1, G_2, \ldots, G_r$ are the connected components of a graph $G$, then
    \begin{align}
        \label{eq:sigma-union-is-product-of-sigmas}
        Z_0(G) = Z_0\left(\bigcup\limits_{i=1}^r G_i \right) = \displaystyle \prod\limits_{i=1}^r Z_0(G_i).
    \end{align}
\end{thm}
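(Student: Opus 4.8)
The plan is to exhibit an explicit bijection between the independent edge subsets of $G$ and the $r$-tuples consisting of one independent edge subset from each component, and then to invoke the rule of product. The crucial structural fact underlying everything is that every edge of $G$ lies in exactly one component $G_i$, and that two edges belonging to distinct components can never share a common end, since distinct components are vertex-disjoint. Thus the notion of adjacency of edges never crosses a component boundary.

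First I would fix a subset $S \subseteq E(G)$ and decompose it as $S = \bigcup_{i=1}^r S_i$, where $S_i = S \cap E(G_i)$. Because the edge sets $E(G_1), \ldots, E(G_r)$ partition $E(G)$, this decomposition is well defined and recovers $S$ uniquely from the tuple $(S_1, \ldots, S_r)$. The key step is then to prove the equivalence: $S$ is an independent edge subset of $G$ if and only if each $S_i$ is an independent edge subset of $G_i$. The forward direction is immediate, since any subset of a set containing no two adjacent edges again contains no two adjacent edges. For the converse, suppose each $S_i$ is an independent edge subset of $G_i$; then any two edges of $S$ lying in the same component are non-adjacent by hypothesis, while any two edges lying in different components are non-adjacent because those components share no vertex. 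Hence $S$ is an independent edge subset of $G$.

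This equivalence shows that the map $S \mapsto (S_1, \ldots, S_r)$ is a bijection from the collection of independent edge subsets of $G$ onto the Cartesian product of the collections of independent edge subsets of $G_1, \ldots, G_r$. Counting both sides and applying the multiplication principle yields $Z_0(G) = \prod_{i=1}^r Z_0(G_i)$, which is the desired identity; the first equality in the statement is merely the observation that $G$ coincides with the disjoint union of its components.

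I do not anticipate a genuine obstacle, as this is a routine counting bijection. The only point requiring care is the converse direction of the equivalence, where one must explicitly use the vertex-disjointness of distinct components to rule out adjacency across components. Without this observation the product formula would not follow, so it is worth stating plainly rather than passing over it.
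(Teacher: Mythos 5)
Your proof is correct: decomposing $S \subseteq E(G)$ as $S_i = S \cap E(G_i)$, proving that $S$ is independent if and only if every $S_i$ is (with the converse resting on the vertex-disjointness of distinct components, which you rightly make explicit), and then applying the multiplication principle is exactly the standard bijective argument for this identity. Note that the paper states this theorem without proof, citing \cite{wagner2010maxima}, so there is no internal argument to compare against; yours is the canonical proof and correctly supplies what the paper leaves to the literature.
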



The following results have been recently established \cite{andriantiana2023number}.
\begin{align*}
    \sigma_1(P_n) = \frac{1}{5}\left[(n-1)\left(\alpha^n + \beta^n \right) + \frac{2}{\sqrt{5}} \left( \alpha^{n-1} -\beta^{n-1} \right) \right].
\end{align*}
and
$$
    \sigma_1(C_n)  = \frac{n}{\sqrt{5}}\left( \alpha^{n-2} - \beta^{n-2} \right).
$$

Thanks to Lemma \ref{Lem:GtoLG}, we also have
\begin{align*}
    Z_1(P_n) &= 
    \sigma_1(L(P_n))=\sigma_1(P_{n-1})
   = \frac{1}{5}\left[(n-2)\left(\alpha^{n-1} + \beta^{n-1} \right) + \frac{2}{\sqrt{5}} \left( \alpha^{n-2} -\beta^{n-2} \right) \right]
\end{align*}
and
\begin{align*}
    Z_1(C_n) &= 
    \sigma_1(L(C_n))=\sigma_1(C_{n})= \frac{n}{\sqrt{5}}\left( \alpha^{n-2} - \beta^{n-2} \right).
\end{align*}

       
\subsection{Recursive formula}

We denote by $\mathbb{P}_G(v)$ the set of $P_3$ subtrees in $G$ that contain the vertex $v$. If there is no risk of confusion, we simply use $\mathbb{P}(v)$.
\begin{lem}
\label{Lem:Rec}
For any vertex $z$ in a graph $G$, we have
\begin{align}
\label{Eq:Rec}
Z_1(G)
=Z_1(G-z)+\sum_{v\in N_G(z)}Z_1(G-z-v)+\sum_{P\in \mathbb{P}_G(z)}Z_0(G-P).
\end{align}
\end{lem}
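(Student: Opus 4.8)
The plan is to prove the identity by partitioning the family of $1$-nearly independent edge subsets of $G$ according to how they meet the vertex $z$, and to recognise each block of the partition as one of the three terms on the right-hand side. The starting point is structural: every $1$-nearly independent edge subset $S$ contains exactly one pair of adjacent edges, and these two edges, sharing a common end, span a $P_3$ subtree that I will call $P^*$; all remaining edges of $S$ are pairwise non-adjacent and each is non-adjacent to both edges of $P^*$. The natural first split is on whether $z\in V(P^*)$.

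First I would treat the case $z\notin V(P^*)$. Any edge of $S$ incident with $z$ then lies outside $P^*$ and hence must be non-adjacent to every other edge of $S$; in particular at most one edge of $S$ is incident with $z$, since two such edges would create a second adjacent pair. If no edge of $S$ meets $z$, then $S\subseteq E(G-z)$ and $S$ is $1$-nearly independent in $G-z$, contributing $Z_1(G-z)$. If exactly one edge $zv$ lies in $S$, its non-adjacency to the rest forces $S\setminus\{zv\}$ to avoid both $z$ and $v$, so $S\setminus\{zv\}$ is a $1$-nearly independent edge subset of $G-z-v$; conversely each such subset extends uniquely by adjoining $zv$. Since the neighbour $v$ with $zv\in S$ is determined by $S$, the blocks for different $v$ are disjoint, and this subcase contributes $\sum_{v\in N_G(z)}Z_1(G-z-v)$. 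Together these two subcases produce the first two terms.

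Next I would handle $z\in V(P^*)$. Here $P^*$ is a $P_3$ through $z$, so $P^*\in\mathbb{P}_G(z)$, and conversely any $P\in\mathbb{P}_G(z)$ may serve as $P^*$. For a fixed $P$, the sets $S$ with $P^*=P$ are exactly those of the form $P\cup M$ with $M=S\setminus P$ a matching non-adjacent to $P$; non-adjacency to $P$ means precisely that no edge of $M$ meets $V(P)$, that is, $M\subseteq E(G-V(P))$ is an independent edge subset. Thus the sets $S$ with $P^*=P$ are in bijection with the $Z_0(G-V(P))$ independent edge subsets of $G-V(P)$ (I read $G-P$ as $G-V(P)$), and summing over $P\in\mathbb{P}_G(z)$ yields the third term. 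I would emphasise that deleting the whole vertex set $V(P)$, rather than only the edges of $P$, is what enforces the non-adjacency constraint, and that this single sum absorbs both shapes of a $P_3$ through $z$, namely $z$ as the centre (two edges at $z$) and $z$ as an endpoint (one edge at $z$).

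The routine bookkeeping aside, the part demanding the most care is verifying that the three blocks form a genuine partition and that each stated correspondence is a bijection. Disjointness rests on the uniqueness of $P^*$ and, in the middle case, on the uniqueness of the neighbour $v$; exhaustiveness uses that $S$ cannot contain three edges incident with $z$, since three such edges already create at least three adjacent pairs. I expect the only genuinely delicate check to be the claim that, when $z\in V(P^*)$, removing $V(P)$ rather than merely the edges of $P$ exactly captures the requirement that the residual matching be non-adjacent to $P$; everything else is direct.
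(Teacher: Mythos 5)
Your proof is correct and follows exactly the same route as the paper's: the paper's own (three-sentence) proof partitions the $1$-nearly independent edge subsets into those avoiding $z$, those containing $z$ in a $P_2$, and those containing $z$ in a $P_3$, which is precisely your case split on whether $z\in V(P^*)$ and, if not, whether an edge of $S$ meets $z$. Your write-up merely supplies the bijection and disjointness details that the paper leaves implicit, including the correct reading of $G-P$ as deletion of $V(P)$.
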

\begin{proof}
$Z_1(G-z)$ counts all the $1$-nearly independent edge subsets that do not contain $z$.
$\sum_{v\in N_G(z)}Z_1(G-z-v)$ counts all those that contain $z$ in a $P_2$.
$\sum_{P\in \mathbb{P}_G(z)}Z_0(G-P)$ counts those that contain $z$ in a $P_3$.
\end{proof}

A \emph{pseudo-leaf} of a forest $T$ is a vertex that is not isolated and has at most one neighbour that is not a leaf (a vertex of degree $1$). We often use \eqref{Eq:Rec} for $z$ being a leaf attached to a pseudo leaf $v$ with degree $d$ and neighbour $u$ of largest degree, so that it becomes
$$
Z_1(G)
=Z_1(G-z)+Z_1(G-z-v)+Z_0(G-N_G(v))+(d-2)Z_0(G-(N_G[v]\setminus \{u\}).
$$

\section{Trees of order $n$ with small $Z_1$}
\label{Sec:Min}
This section characterises the two trees of order $n\geq 9$ that have the smallest $Z_1$.
%
%
First, we show that the $n$-vertex tree with smallest $Z_1$ is only the star if $n\geq 9$. 
\begin{thm}
\label{Thm:Star_Min_tree}
Among all connected graphs, in particular trees, $T$ of order $n\geq 9$ we have 
$Z_1(T)\geq Z_1(K_{1,n-1})$, with equality if and only if $T$ is $K_{1,n-1}$.
\end{thm}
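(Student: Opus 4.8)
The plan is to reduce to trees, identify the target value $Z_1(K_{1,n-1})$, and then run a strong induction on $n$ built on the pseudo-leaf specialisation of \eqref{Eq:Rec}.

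\emph{Target value and reduction.} In $K_{1,n-1}$ every two edges meet at the centre, so a subset of $E$ has exactly one adjacent pair precisely when it consists of two edges; hence $Z_1(K_{1,n-1})=\binom{n-1}{2}$, and it suffices to prove $Z_1(T)\ge\binom{n-1}{2}$ with equality only for the star. If a connected $T$ is not a tree, choose an edge $e$ on a cycle; then $T-e$ is connected and, by the edge-addition discussion in Subsection~\ref{Sub:REmE}, $Z_1(T)>Z_1(T-e)$ (the ends of $e$ remain non-isolated in $T-e$, so the increase is strict). Passing to a spanning tree, the minimum over connected graphs is attained on a tree, so we may assume $T$ is a tree.

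\emph{Inductive step.} Write $\binom{n-1}{2}=\binom{n-2}{2}+(n-2)$. Let $T\ne K_{1,n-1}$, take a longest path, and let $v$ be its second vertex: then $v$ is a pseudo-leaf with a pendant leaf $z$, degree $d$, and a unique non-leaf neighbour $u$. The pseudo-leaf form of \eqref{Eq:Rec} gives
\begin{align*}
Z_1(T)=Z_1(T-z)+Z_1(C)+Z_0(C-u)+(d-2)Z_0(C),
\end{align*}
where $C$ is the tree on $n-d$ vertices cut out as the component of $u$ in $T-v$. Applying the inductive bound to $T-z$ gives $Z_1(T-z)\ge\binom{n-2}{2}$, so it remains to prove $\Delta:=Z_1(C)+Z_0(C-u)+(d-2)Z_0(C)\ge n-2$, strictly. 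Using $Z_0(C)\ge n-d$ (the empty matching plus the $n-d-1$ single edges) and $Z_0(C-u)\ge 1$, for $d\ge 3$ we get $\Delta\ge Z_1(C)+1+(d-2)(n-d)$, and $(d-2)(n-d)\ge n-3$ on $3\le d\le n-1$; since $d\ne n-1$ (as $T$ is not a star) either $d\ge 4$, where $(d-2)(n-d)>n-3$, or $d=3$, where $C$ has at least three vertices and so contains a $P_3$, giving $Z_1(C)\ge 1$ --- in both cases strictly. For $d=2$ the last term drops out and $C$ is a tree on $n-2$ vertices, so the inductive bound yields $Z_1(C)\ge\binom{n-3}{2}>n-2$. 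Hence $Z_1(T)>\binom{n-1}{2}$, which closes the step.

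\emph{Base cases and the main obstacle.} The reductions above replace $T$ by trees of orders $n-1$ and $n-d$, and the bound $Z_1\ge\binom{m-1}{2}$ simply fails for small $m$: for $n\le 6$ the path matches or beats the star (it ties at $n=6$), and for $n\in\{7,8\}$ the star is no longer the unique minimiser. Consequently the induction is self-sustaining only once these competitors have been overtaken, i.e.\ for $n\ge 11$ (where both $T-z$ and $C$ have at least nine vertices), and the orders $n\in\{9,10\}$ --- together with the small cofactors $C$ that arise in the $d=2$ case --- must be settled by hand. Isolating the finitely many near-minimal trees of these orders and checking each against the star is the delicate part of the argument, and it is exactly this that pins the threshold at $n\ge 9$; the asymptotic step above is comparatively routine. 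An alternative that smooths the bookkeeping is to strengthen the inductive hypothesis so that it directly lower-bounds the combination $Z_1(C)+Z_0(C-u)$ occurring in $\Delta$, at the cost of a slightly longer base verification.
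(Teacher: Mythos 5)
Your proposal is correct, and its skeleton coincides with the paper's: reduce to trees by deleting cycle edges (Subsection~\ref{Sub:REmE}), note $Z_1(K_{1,n-1})=\binom{n-1}{2}$, take the base cases $n=9,10$ from the appendix tables, and induct using the pseudo-leaf form of \eqref{Eq:Rec}; your decomposition $Z_1(T)=Z_1(T-z)+Z_1(C)+Z_0(C-u)+(d-2)Z_0(C)$ is literally the paper's, with $C=T-(N[v]\setminus\{u\})$ up to isolated vertices. The genuine difference lies in how the cofactor terms are bounded, and your route is the sounder one. The paper replaces every factor by the star of the same order: it uses $Z_1(T-(N[v]\setminus\{u\}))\ge Z_1(K_{1,n-d-1})$, which applies the theorem to a tree of order $n-d$ that can be far below $9$, where star-minimality of $Z_1$ is simply false (for instance $Z_1(P_4)=2<3$, $Z_1(P_5)=5<6$, and $Z_1([P_1,P_2,P_2])=9<10$ at order $6$); and it uses $Z_0(T-N[v])\ge Z_0(K_{1,n-d-2})$ on the forest $T-N[v]$, which can be disconnected, where star-minimality of $Z_0$ also fails (that quantity equals $1$ when $C$ is a star centred at $u$). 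You never do this: for $d\ge3$ you use only the unconditional estimates $Z_0(C)\ge n-d$, $Z_0(C-u)\ge1$ and $Z_1(C)\ge1$ for $|C|\ge3$, together with concavity of $(d-2)(n-d)$ on $[3,n-1]$, and you invoke the induction hypothesis only on graphs of guaranteed order at least nine, namely $T-z$ (order $n-1\ge10$) and, in the case $d=2$ only, $C$ (order $n-2\ge9$ once $n\ge11$). The price is a cruder surplus ($\Delta\ge n-1$ rather than the paper's quadratic margin); the gain is that every inequality you write is valid, so your argument in effect repairs a gap in the published proof while keeping the same base cases and the same recursion.

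Two corrections to your closing discussion, neither load-bearing. First, the claim that for $n\in\{7,8\}$ the star is no longer the unique minimiser is wrong: checking the trees of these orders gives minimum $15$ at $n=7$ and $21$ at $n=8$, attained only by the stars, with the broom $B^3_n$ as runner-up at $(n-3)^2+1$; the genuine failures occur for $n\le6$ (the path at $n\in\{4,5\}$, and $[P_1,P_2,P_2]$ with $Z_1=9<10=Z_1(K_{1,5})$ at $n=6$). Second, no ``small cofactors $C$'' ever need hand-checking in your $d=2$ case, since there $|C|=n-2\ge9$ whenever $n\ge11$; the only hand-verification required is for the orders $n=9,10$ themselves, exactly as in the paper.
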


\begin{proof}
As discussed in Subsection \ref{Sub:REmE}, Removing an edge from a connected graph of order $n\geq 9$ decreases $Z_1$. Hence, we can restrict this proof to the case where $T$ is a tree.

The basis cases corresponding to $n=9$ and $10$ can be seen in Table \ref{table:trees-of-order-9} and Table \ref{table:trees-of-order-10}. Suppose that the claim holds for $n=k$, for some $k\geq 10$. Now consider the case of $n=k+1$. Let $v$ be a pseudo-leaf of degree $d$ in $T$, $z$ a leaf neighbour of $v$ and $u$ a neighbor of $v$ with largest degree that might possibly be not a leaf. If $d=n-1$, then $T \cong K_{1,n-1}$. So, we may assume that $d\le n-2$. Thus, we have
\begin{align*}
Z_1(T)=&Z_1(T-z)+Z_1(T-v-z)+Z_0(T-z-v-u)+(d-2)Z_0(T-N[v]\setminus \{u\})\\
=&Z_1(T-z)+Z_1(T-N[v]\setminus \{u\})+Z_0(T-N[v])+(d-2)Z_0(T-N[v]\setminus \{u\})\\
\geq& Z_1(K_{1,(n-1)-1})+Z_1(K_{1,(n-d-1)})+Z_0(K_{1,n-(d+1)-1})+(d-2)Z_0(K_{1,(n-d-1)})\\
=&\frac{(n-2)(n-3)}{2}+\frac{(n-d-1)(n-d-2)}{2} +n-(d+1)-1+1 \\&+ (d-2)(n-d-1+1)\\
=&-\frac{d^2}{2} + n^2 + \frac{5d}{2} - 5n + 3.
\end{align*}
Hence,
\begin{align*}
Z_1(T)-Z_1(K_{1,n-1})
&\ge -\frac{d^2}{2} + n^2 + \frac{5d}{2} - 5n + 3-\frac{(n-1)(n-2)}2\\
&=-\frac{d^2}2 + \frac{n^2}2 + \frac{5d}2 - \frac{7n}2 + 2\\
&=\frac{n^2}{2} - \frac{7n}{2}+2-\left( \frac{d^2}{2} -\frac{5d}{2} \right).
 \end{align*}
However, 
\begin{align*}
    \frac{d^2}{2} -\frac{5d}{2} \le \frac{(n-2)^2}{2} - \frac{5(n-2)}{2} = \frac{n^2}{2} -\frac{9n}{2} +7.
\end{align*}
Thus,
\begin{align*}
    Z_1(T)-Z_1(K_{1,n-1}) &\ge \frac{n^2}{2} - \frac{7n}{2}+2-\left( \frac{d^2}{2} -\frac{5d}{2} \right)\\
    &\geq  \frac{n^2}{2} - \frac{7n}{2}+2 - \left( \frac{n^2}{2} -\frac{9n}{2} +7  \right)=n-5> 0 &\text{ since } n> 5,
\end{align*}
completing the proof.
\end{proof}

We use similar techniques to find the tree with second-minimum $Z_1$. Let $B^k_n$ be the tree of order $n$ obtained from
a path, $P_k$, of order $k$ by adding $n - k$ new vertices and then joining them to exactly
one end-vertex of $P_k$. Such a tree is usually called a broom. $B^3_n$ is the only $n$-vertex tree with degree sequence $(n-2,2,1,\dots,1)$.

\begin{thm}
\label{Thm:2nd_Min_tree}
Among all trees $T\neq K_{1,n-1}$ of order $n\geq 9$ we have 
$Z_1(T)\geq Z_1(B^3_n)$, with equality if and only if $T$ is $B_n^3$.
\end{thm}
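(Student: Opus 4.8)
The plan is to mirror the inductive strategy used in the proof of Theorem~\ref{Thm:Star_Min_tree}, now restricted to the class of trees that are not the star. The base cases $n=9$ and $n=10$ are read off directly from the tables of small trees (Table~\ref{table:trees-of-order-9} and Table~\ref{table:trees-of-order-10}), where one checks that $B^3_n$ attains the second-smallest value of $Z_1$ and is the unique such tree. For the inductive step I assume the claim holds at order $k\ge 10$ and take a tree $T\ne K_{1,n-1}$ of order $n=k+1$. As before, I choose a pseudo-leaf $v$ of degree $d$, a leaf neighbour $z$ of $v$, and a neighbour $u$ of $v$ of largest degree, and apply the specialised recursion
\begin{align*}
Z_1(T)
=Z_1(T-z)+Z_1(T-N[v]\setminus\{u\})+Z_0(T-N[v])+(d-2)Z_0(T-N[v]\setminus\{u\}).
\end{align*}

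The essential new difficulty is that I cannot simply lower-bound every term by its star value, since doing so only recovers the bound against $K_{1,n-1}$, not against $B^3_n$. Instead I split into cases according to the structure of $T-z$, which is a tree of order $n-1$. If $T-z$ is still the star $K_{1,n-2}$, then $T$ is forced into a very restricted family (essentially $v$ adjacent to the star centre, giving a double star or a broom), and these few configurations can be compared to $B^3_n$ by hand using the explicit small-graph values. Otherwise $T-z$ is a non-star tree of order $n-1\ge 9$, so the induction hypothesis gives $Z_1(T-z)\ge Z_1(B^3_{n-1})$, and I combine this with the crude lower bounds $Z_1(T-N[v]\setminus\{u\})\ge Z_1(K_{1,n-d-1})$, $Z_0(T-N[v])\ge 1$, and $Z_0(T-N[v]\setminus\{u\})\ge 1$ for the remaining terms. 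The resulting inequality must then be checked to dominate $Z_1(B^3_n)$; this reduces, after substituting the explicit values of $Z_1(B^3_{n-1})$ and $Z_1(B^3_n)$ (computable from the broom's degree sequence $(n-2,2,1,\dots,1)$), to a polynomial inequality in $n$ and $d$ that holds for $n\ge 11$ and $2\le d\le n-2$.

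I expect the main obstacle to be the boundary case $d=n-2$, where $v$ is adjacent to all but one vertex and $T$ is close to the star. Here the crude bounds are too weak, so I would compute $Z_1(T)$ exactly for the handful of trees with a vertex of degree $n-2$ (the brooms $B^3_n$, $B^4_n$, and the double stars with one centre of degree $n-2$) and verify directly that $B^3_n$ is the minimiser among the non-stars. A second delicate point is establishing that $B^3_n$ itself sits just above $K_{1,n-1}$: I would record the exact value $Z_1(B^3_n)$ by applying the recursion once at the degree-two vertex of the handle, expressing it in terms of $Z_1(K_{1,n-1})$ plus a correction, and confirm the gap is strictly smaller than for every other non-star tree. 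The uniqueness assertion follows by tracking equality throughout: equality in the induction hypothesis forces $T-z=B^3_{n-1}$, and equality in the crude bounds forces the deleted neighbourhoods to induce stars, which together pin down $T\cong B^3_n$.
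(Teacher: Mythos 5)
Your proof skeleton is the same as the paper's: induction anchored at the tables for $n=9,10$; the pseudo-leaf recursion applied at a leaf $z$ of $v$; the observation that if $T-z$ is a star then $T$ is essentially determined (the paper notes that attaching $z$ to a leaf of $K_{1,n-2}$ forces $T\cong B^3_n$, so your ``restricted family'' is in fact a single tree); and the bounds $Z_1(T-z)\ge Z_1(B^3_{n-1})$ from the induction hypothesis and $Z_1(T-(N[v]\setminus\{u\}))\ge Z_1(K_{1,n-d-1})$ from Theorem~\ref{Thm:Star_Min_tree}. The genuine gap is in your treatment of the two $Z_0$ terms. With your crude bounds $Z_0(T-N[v])\ge 1$ and $Z_0(T-(N[v]\setminus\{u\}))\ge 1$, the inductive step amounts to the inequality
\[
(n-4)^2+1+\tfrac{(n-d-1)(n-d-2)}{2}+1+(d-2)\;\ge\;(n-3)^2+1,
\]
equivalently $\tfrac{(n-d-1)(n-d-2)}{2}+d-2n+6\ge 0$. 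This is \emph{false} for large $d$: at $d=n-3$ the left-hand side equals $4-n<0$, and it remains negative for all $d$ between roughly $n-\sqrt{2n}$ and $n-3$. So your claim that the polynomial inequality ``holds for $n\ge 11$ and $2\le d\le n-2$'' is wrong, and the failure is not confined to the single boundary case $d=n-2$ that you propose to settle by hand: it occurs on a range of degrees of width about $\sqrt{2n}$, realized for instance by double stars and by brooms with a bushy head, which are not a finite ``handful'' of configurations.

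The missing ingredient --- and the one substantive tool the paper uses that you do not --- is that the star minimizes the Hosoya index $Z_0$ among trees of a given order. Since $T-(N[v]\setminus\{u\})$ is a \emph{tree} of order $n-d$ (everything remaining hangs off $u$), this gives $Z_0(T-(N[v]\setminus\{u\}))\ge Z_0(K_{1,n-d-1})=n-d$, so the term you bound by $d-2$ becomes $(d-2)(n-d)$. That product grows with $d$ and rescues the step exactly where your estimate dies; the paper's computation then yields $Z_1(T)-Z_1(B^3_n)\ge \tfrac{n^2}{2}-\tfrac{9n}{2}+7-\bigl(\tfrac{d^2}{2}-\tfrac{5d}{2}\bigr)\ge n-5>0$ for all $d\le n-3$, while $d=n-2$ is eliminated structurally: a tree of order $n$ with a vertex of degree $n-2$ has degree sequence $(n-2,2,1,\dots,1)$ and is $B^3_n$ itself, so no case-by-case computation is needed. (Curiously, your crude bound $Z_0(T-N[v])\ge 1$ on the \emph{disconnected} piece is harmless --- indeed safer than the paper's star bound on that term, since $T-N[v]$ can be edgeless, e.g.\ for double stars --- because the slack in $(d-2)(n-d)$ absorbs the loss; the fatal weakening is only in the $(d-2)Z_0(T-(N[v]\setminus\{u\}))$ term.) Without upgrading that term, your induction does not close.
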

\begin{proof}
Note that 
$$
Z_1(B^3_n)
=Z_1(K_{1,n-2})+Z_1(K_{1,n-3})+1
=\frac{(n-2)(n-3)+(n-3)(n-4)+2}{2}
=(n-3)^2+1.
$$
The basis cases corresponding to $n=9$ and $10$ can be seen in Table \ref{table:trees-of-order-9} and Table \ref{table:trees-of-order-10}. Suppose that the claim holds for $n=k$, for some $k\geq 10$. Now consider the case of $n=k+1$. Let $v$ be a pseudo-leaf of degree $d$ in $T$, $z$ a leaf neighbour of $v$ and $u$ a neighbor of $v$ with largest degree that might possibly be not a leaf. If $T-z$ is a star, then $T$ is $B^3_n$, and there would be nothing left to prove. Hence, we can assume that $T-z$ is not a star. Since $T\neq K_{1,n-1}$, we must have $n \geq d+2$. For $n=d+2$ the graph $T$ is isomorphic to $B^3_n$. So, we can consider $n\geq d+3$.
Thus, we have
\begin{align*}
Z_1(T)=&Z_1(T-z)+Z_1(T-v-z)+Z_0(T-z-v-u)+(d-2)Z_0(T-N[v]\setminus \{u\})\\
=&Z_1(T-z)+Z_1(T-N[v]\setminus \{u\})+Z_0(T-N[v])+(d-2)Z_0(T-N[v]\setminus \{u\})\\
\geq& Z_1(B^3_{n-1})+Z_1(K_{1,(n-d-1)})+Z_0(K_{1,n-(d+1)-1})+(d-2)Z_0(K_{1,(n-d-1)})\\
=&(n-3-1)^2+1+\frac{(n-d-1)(n-d-2)}{2} +n-(d+1)-1+1 \\&+ (d-2)(n-d-1+1)\\
=&-\frac{d^2}{2} + \frac{3n^2}{2} + \frac{5d}{2} - \frac{21n}{2} + 17.
\end{align*}
Hence,
\begin{align*}
Z_1(T)-Z_1(B^3_n)
&\ge -\frac{d^2}{2} + \frac{3n^2}{2} + \frac{5d}{2} - \frac{21n}{2} + 17 - (n-3)^2-1\\
&= \frac{n^2}{2} - \frac{9n}{2} +7 - \left( \frac{d^2}{2} -\frac{5d}{2} \right).
\end{align*}
However, 
\begin{align*}
    \frac{d^2}{2} -\frac{5d}{2} \le \frac{(n-3)^2}{2} - \frac{5(n-3)}{2} = \frac{n^2}{2} -\frac{11n}{2} +12.
\end{align*}
Thus,
\begin{align*}
    Z_1(T)-Z_1(B^3_n) &\ge \frac{n^2}{2} - \frac{9n}{2} +7 - \left( \frac{d^2}{2} -\frac{5d}{2} \right)\\
    &\geq  \frac{n^2}{2} - \frac{9n}{2}+7 - \left( \frac{n^2}{2} -\frac{11n}{2} +12  \right)\\
    &=n-5> 0 &\text{ since } n> 5,
\end{align*}
completing the proof.
\end{proof}

\section{Forest with maximum $Z_1$}
\label{Sec:Max}

In this section we show that $P_n$ is the tree of order $n\geq 9$ that has the largest $Z_1$. We attempted to also determine the one that has the second-largest $Z_1$. We only managed to prove that it has to be a tripod (a tree with only three leaves). Based on a computer check for small values of $n$, a conjecture describing the full characterisation is provided.

We start with a few technical lemmas.
\begin{lem}
\label{Lem:Tech0}
Let $n$ be an integer.
\begin{itemize}
\item[i)] If $n\geq 0$, we have $Z_0(P_n)\geq n$.

\item[ii)] If $n\geq 4$, we have $Z_0(P_n)\geq n+1$.

\item[iii)] If $n\geq 3$, we have $Z_1(P_n)\geq n-2$.
\end{itemize}
\end{lem}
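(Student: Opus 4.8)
The plan is to prove all three inequalities by a direct counting argument, exhibiting in each case an explicit sub-family of subsets of $E(P_n)$ of the required size, rather than manipulating the closed-form expressions recorded earlier. Label the edges of $P_n$ as $e_1,e_2,\dots,e_{n-1}$ in the order in which they appear along the path, so that $e_i$ and $e_j$ are adjacent precisely when $|i-j|=1$.

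For part (i), I would note that the empty set together with the $n-1$ singletons $\{e_1\},\dots,\{e_{n-1}\}$ are all independent edge subsets, which already gives $Z_0(P_n)\ge 1+(n-1)=n$ for every $n\ge 1$; the remaining value $n=0$ is covered by the convention $Z_0(P_0)=1\ge 0$. For part (ii), when $n\ge 4$ the path has at least three edges, so the two end-edges $e_1$ and $e_{n-1}$ share no vertex and $\{e_1,e_{n-1}\}$ is an independent edge subset distinct from all those used in (i). Adjoining it to the $n$ subsets from (i) yields $Z_0(P_n)\ge n+1$.

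For part (iii), I would use the $n-2$ consecutive pairs $\{e_i,e_{i+1}\}$ for $i=1,\dots,n-2$. Each such set contains exactly one pair of adjacent edges, namely itself, and hence is a $1$-nearly independent edge subset; since these $n-2$ sets are pairwise distinct, we obtain $Z_1(P_n)\ge n-2$ for all $n\ge 3$ (the case $n=3$ being an equality, consistent with the convention $Z_1(P_t)=0$ for $t\le 2$).

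Because each bound comes from counting a concrete sub-family, there is essentially no analytic obstacle; the only point requiring care is to respect the boundary conventions $Z_0(P_t)=1$ for $t\le 1$ and $Z_1(P_t)=0$ for $t\le 2$, so that the smallest admissible values of $n$ are genuinely covered. If a reader prefers a self-contained argument starting from the recursions, one can instead run a two-step induction using $Z_0(P_n)=Z_0(P_{n-1})+Z_0(P_{n-2})$ together with the specialization of Lemma~\ref{Lem:Rec} to a path, but the direct counting above is shorter and makes visible that the bound in (iii) is far from tight once $n\ge 5$.
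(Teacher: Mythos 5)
Your proof is correct, and for parts (i) and (ii) it takes a genuinely different route from the paper. The paper proves (i) and (ii) by induction on $n$ via the Fibonacci-type recursion $Z_0(P_n)=Z_0(P_{n-1})+Z_0(P_{n-2})$, checking the base cases $n=0,1,2$ (resp.\ $n=4,5$) and then bounding the sum of the two inductive hypotheses; you instead exhibit explicit families — the empty set and the $n-1$ singletons for (i), augmented by the independent pair $\{e_1,e_{n-1}\}$ of end-edges for (ii) (where $n\ge 4$ is exactly what makes $e_1$ and $e_{n-1}$ non-adjacent). Your counting argument is shorter, avoids any reliance on the recursion, and makes transparent how loose the linear bounds are compared to the actual Fibonacci growth of $Z_0(P_n)$; the paper's induction, on the other hand, is of a piece with the recursive machinery used throughout the rest of the paper and exercises the same boundary conventions that later lemmas depend on. For part (iii) your argument coincides with the paper's: the $n-2$ pairs $\{e_i,e_{i+1}\}$ of consecutive edges are precisely the "$n-2$ copies of $P_3$" the paper invokes, each contributing a $1$-nearly independent edge subset. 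The only care points — the conventions $Z_0(P_t)=1$ for $t\le 1$ and $Z_1(P_t)=0$ for $t\le 2$, and the distinctness of the exhibited subsets — are all handled correctly in your write-up.
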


\begin{proof}
For $n=0,1,2$, we have $Z_0(P_0)=1\geq 0$, $Z_0(P_1)=1\geq 1$ and $Z_0(P_2)=2\geq 2$. Suppose that i) holds for $n=k\geq 2$, then for $n=k+1\geq 3$ we have
$$
Z_0(P_n)=Z_0(P_{k+1})=Z_0(P_k)+Z_0(P_{k-1})
\geq k+k-1 \geq k+1,
$$
since $k\geq 2$, thereby proving i).

For $n=4,5$, we have $Z_0(P_4)=5\geq 4+1$ and
$Z_0(P_5)=7\geq 5+1$. If $Z_0(P_n)\geq n+1$ for all $4\leq n \leq k$ for some $k\geq 5$, then
$$
Z_0(P_{k+1})=Z_0(P_k)+Z_0(P_{k-1})\geq k+1 +k-1+1\geq (k+1)+1.
$$
This proves ii).

The proof of iii), namely $Z_1(P_n)\geq n-2$ follows from the fact that $P_n$ has at least $n-2$ copies of $P_3$.
\end{proof}
\begin{lem}
\label{LEm:Tech1}
For any integers $n$ and $d$ with $n\geq d+1$ and $d\geq 5$, we have
$$
(d-1)Z_0(P_{n-d})+Z_1(P_{n-d})\leq Z_0(P_{n-3})+Z_1(P_{n-2}).
$$
\end{lem}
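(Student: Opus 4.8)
The plan is to reduce everything to the Fibonacci-type recurrences for $Z_0$ and $Z_1$ on paths and then to a one-parameter monotonicity argument. First I would record the recurrence $Z_1(P_n) = Z_1(P_{n-1}) + Z_1(P_{n-2}) + Z_0(P_{n-3})$, valid for $n \geq 3$, obtained by applying Lemma \ref{Lem:Rec} to an end-vertex $z$ of $P_n = v_1 v_2 \cdots v_n$ (its only neighbour is $v_2$, and the unique $P_3$ through $z$ is $v_1 v_2 v_3$), together with the familiar $Z_0(P_n) = Z_0(P_{n-1}) + Z_0(P_{n-2})$ for $n \geq 2$ and the monotonicity $Z_0(P_{j}) \leq Z_0(P_{j+1})$.

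Next I would substitute $k = n - d$, so the hypothesis $n \geq d+1$ becomes $k \geq 1$ and the claimed inequality becomes
\[
(d-1)Z_0(P_k) + Z_1(P_k) \;\leq\; Z_0(P_{k+d-3}) + Z_1(P_{k+d-2}).
\]
Write $g(d)$ for the difference (right side minus left side) at a fixed $k$. The heart of the argument is to show $g$ is increasing in $d$: using the two recurrences one finds $g(d+1) - g(d) = 2Z_0(P_{k+d-4}) + Z_1(P_{k+d-3}) - Z_0(P_k)$, and since $d \geq 5$ forces $k+d-4 \geq k+1$, monotonicity of $Z_0$ gives $Z_0(P_{k+d-4}) \geq Z_0(P_k)$, whence $g(d+1) - g(d) > 0$. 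Hence it suffices to verify the base case $d = 5$.

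For the base case I must show $4Z_0(P_k) + Z_1(P_k) \leq Z_0(P_{k+2}) + Z_1(P_{k+3})$ for every $k \geq 1$. I would expand $Z_0(P_{k+2}) = 2Z_0(P_k) + Z_0(P_{k-1})$ and telescope $Z_1(P_{k+3}) - Z_1(P_k)$ through the recurrence into $Z_1(P_{k+1}) + Z_1(P_k) + Z_1(P_{k-1}) + Z_0(P_k) + Z_0(P_{k-1}) + Z_0(P_{k-2})$; after cancellation the difference (right side minus left side) collapses to $Z_1(P_{k+1}) + Z_1(P_k) + Z_1(P_{k-1}) + Z_0(P_{k-1})$, a sum of nonnegative quantities. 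This settles $k \geq 2$, and the single remaining value $k = 1$ is checked by hand, since $4 = 4Z_0(P_1) + Z_1(P_1) \leq Z_0(P_3) + Z_1(P_4) = 5$.

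The conceptual steps are clean; the main obstacle is purely bookkeeping. The recurrence for $Z_1(P_n)$ holds only for $n \geq 3$, and the conventions $Z_1(P_t) = 0$ for $t \leq 2$ and $Z_0(P_t) = 1$ for $t \leq 1$ must be respected, so I would check carefully that every index appearing in the telescoping stays in the range where the identity I invoke is actually valid, treating the smallest values of $k$ separately whenever a telescoped term would otherwise reach some $P_t$ with $t$ below the validity threshold.
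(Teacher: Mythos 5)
Your proof is correct, but it runs the induction in a genuinely different direction from the paper's. The paper fixes $d$ and performs a two-term induction on $n$: the base cases $n=d+1$ and $n=d+2$ are handled with the crude linear bounds of Lemma~\ref{Lem:Tech0}, and the inductive step expands both sides via the recurrences $Z_0(P_m)=Z_0(P_{m-1})+Z_0(P_{m-2})$ and $Z_1(P_m)=Z_1(P_{m-1})+Z_1(P_{m-2})+Z_0(P_{m-3})$ and invokes the hypothesis at both $n-1$ and $n-2$. You instead fix $k=n-d$ and show that the deficit $g(d)$ is increasing in $d$, reducing everything to the single boundary case $d=5$, which you then settle for all $k$ simultaneously by an exact collapse: for $k\ge 2$ one has $Z_0(P_{k+2})+Z_1(P_{k+3})-4Z_0(P_k)-Z_1(P_k)=Z_0(P_{k-1})+Z_1(P_{k+1})+Z_1(P_k)+Z_1(P_{k-1})$, with $k=1$ checked by hand. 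I verified both key computations: $g(d+1)-g(d)=2Z_0(P_{k+d-4})+Z_1(P_{k+d-3})-Z_0(P_k)\ge Z_0(P_k)+Z_1(P_{k+d-3})>0$, using $k+d-4\ge k+1$ and the monotonicity of $Z_0$ along paths, and the collapse identity (e.g.\ at $k=3$: $Z_0(P_5)+Z_1(P_6)-4Z_0(P_3)-Z_1(P_3)=18-13=5=Z_0(P_2)+Z_1(P_4)+Z_1(P_3)+Z_1(P_2)$); every recurrence application stays within its validity range ($m\ge 2$ for $Z_0$, $m\ge 3$ for $Z_1$), as you were careful to note. What your route buys: it bypasses Lemma~\ref{Lem:Tech0} entirely, replaces two parametric base cases by one numerical check, and the exact identity shows the inequality is in fact always strict (the deficit is at least $Z_0(P_{k-1})\ge 1$ at $d=5$ and grows with $d$), which the paper's argument does not directly exhibit. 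What the paper's route buys: its induction on $n$ is structurally identical to the proofs of Lemmas~\ref{Lem:Tech3} and~\ref{Lem:Tech4}, so the three technical lemmas of Section~\ref{Sec:Max} are proved uniformly by one template.
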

\begin{proof}
We proceed by induction on $n$. For $n=d+1\geq 4$, using Lemma \ref{Lem:Tech0} we have
\begin{align*}
(d-1)Z_0(P_1)+Z_1(P_1)&=d-1
\leq d+1-3+1\leq Z_0(P_{d+1-3})+Z_1(P_{d+1-2})\end{align*}
for $d\geq 5$. For $n=d+2\geq 5$, using Lemma \ref{Lem:Tech0} we have
\begin{align*}
(d-1)Z_0(P_2)+Z_1(P_2)&=2(d-1)
\leq d-1+1+d-2\leq Z_0(P_{d+2-3})+Z_1(P_{d+2-2})\end{align*}
for $d\geq 5$. 
Suppose that the claim holds for $n=k\geq d+2$. Now consider the case of $n=k+1$. Then we have
\begin{align*}
&Z_0(P_{n-3}) +Z_1(P_{n-2})\\
&= Z_0(P_{n-4})+Z_0(P_{n-5})+Z_1(P_{n-3})+Z_1(P_{n-4})+Z_0(P_{n-5})\\
&=Z_0(P_{(n-1)-3})+Z_0(P_{(n-2)-3})
+Z_1(P_{(n-1)-2})+Z_1(P_{(n-2)-2})+Z_0(P_{n-5})\\
&\geq (d-1)Z_0(P_{n-1-d})+Z_1(P_{n-1-d})
+(d-1)Z_0(P_{n-2-d})+Z_1(P_{n-2-d})+Z_0(P_{n-3-d})
\\
&=(d-1)Z_0(P_{n-d})+Z_0(P_{n-d}),
\end{align*}
as required.
\end{proof}
\begin{lem}
\label{Lem:Tech3}
For any integer $n\geq 7$, we have
$$
Z_1(P_{n-3})+Z_0(P_{n-4})+Z_0(P_{n-3})
\leq Z_1(P_{n-2})+Z_0(P_{n-3}).
$$
\end{lem}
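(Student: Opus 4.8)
The plan is to first cancel the common term $Z_0(P_{n-3})$ appearing on both sides, which reduces the claim to the cleaner inequality
$$
Z_1(P_{n-3})+Z_0(P_{n-4})\leq Z_1(P_{n-2}).
$$
The natural engine here is the recursion for $Z_1$ on a path obtained by applying Lemma~\ref{Lem:Rec} to an end vertex $z$ of $P_m$: its unique neighbour gives the $Z_1(P_{m-1})+Z_1(P_{m-2})$ contribution, and the single $P_3$ of $\mathbb{P}_{P_m}(z)$ contributes $Z_0(P_{m-3})$, yielding $Z_1(P_m)=Z_1(P_{m-1})+Z_1(P_{m-2})+Z_0(P_{m-3})$ (this is exactly the expansion already used silently in the proof of Lemma~\ref{LEm:Tech1}). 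Taking $m=n-2$ and subtracting $Z_1(P_{n-3})$ from both sides, the target becomes $Z_0(P_{n-4})\leq Z_1(P_{n-4})+Z_0(P_{n-5})$.

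Next I would invoke the standard Fibonacci-type recursion $Z_0(P_{n-4})=Z_0(P_{n-5})+Z_0(P_{n-6})$ from Theorem~\ref{thm:sigma-of-a-path-by-eric} (or directly), so that $Z_0(P_{n-5})$ cancels and the whole statement collapses to the single auxiliary inequality $Z_0(P_{n-6})\leq Z_1(P_{n-4})$. Writing $m=n-4$, and noting $n\geq 7$ forces $m\geq 3$ and $m-2=n-6\geq 1$, this is precisely
$$
Z_0(P_{m-2})\leq Z_1(P_m)\qquad\text{for }m\geq 3,
$$
which I regard as the real content of the lemma and would isolate as the key step.

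I would prove this auxiliary inequality by induction on $m$. The base cases $m=3$ (where $Z_1(P_3)=1=Z_0(P_1)$) and $m=4$ (where $Z_1(P_4)=2=Z_0(P_2)$) are checked directly. For the inductive step, apply the $Z_1$-recursion above to get $Z_1(P_m)=Z_1(P_{m-1})+Z_1(P_{m-2})+Z_0(P_{m-3})$; the induction hypothesis gives $Z_1(P_{m-1})\geq Z_0(P_{m-3})$ and $Z_1(P_{m-2})\geq Z_0(P_{m-4})$, so
$$
Z_1(P_m)\geq Z_0(P_{m-3})+Z_0(P_{m-4})+Z_0(P_{m-3})=Z_0(P_{m-2})+Z_0(P_{m-3})\geq Z_0(P_{m-2}),
$$
using $Z_0(P_{m-2})=Z_0(P_{m-3})+Z_0(P_{m-4})$ and nonnegativity of $Z_0$.

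Assembling these reductions back in the reverse order recovers $Z_1(P_{n-3})+Z_0(P_{n-4})\leq Z_1(P_{n-2})$, and restoring the cancelled $Z_0(P_{n-3})$ gives the stated lemma. The computation is elementary once the reductions are made; the only place demanding genuine care is the chain of cancellations, where I must track the path indices precisely (checking that each subscript stays in the regime where the conventions $Z_1(P_t)=0$ for $t\leq 2$ and $Z_0(P_t)=1$ for $t\leq 1$ keep the recursions valid) so that the telescoping to $Z_0(P_{m-2})\leq Z_1(P_m)$ is exact rather than merely heuristic. I expect this bookkeeping, rather than any conceptual difficulty, to be the main obstacle.
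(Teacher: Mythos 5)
Your proof is correct, and it takes a genuinely different route from the paper's. The paper proves the lemma by induction on $n$ applied directly to the stated inequality: it verifies $n=7,8$ numerically, then in the inductive step expands every term one level via the Fibonacci-type recursions $Z_1(P_k)=Z_1(P_{k-1})+Z_1(P_{k-2})+Z_0(P_{k-3})$ and $Z_0(P_k)=Z_0(P_{k-1})+Z_0(P_{k-2})$, regroups the result into copies of the statement at $n-1$ and $n-2$ plus a leftover $Z_0(P_{n-6})$, and invokes the induction hypothesis twice --- the same template used for Lemmas \ref{LEm:Tech1} and \ref{Lem:Tech4}, at the cost of heavy bookkeeping (indeed, the paper's final line is written as an equality when it is really only a ``$\leq$'', since the leftover term is $Z_0(P_{n-6})$ rather than the $Z_0(P_{n-5})$ that an exact regrouping would require). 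You instead cancel the common term $Z_0(P_{n-3})$, apply each recursion exactly once, and collapse the lemma to the clean monotone inequality $Z_0(P_{m-2})\leq Z_1(P_m)$ for $m\geq 3$, which you then prove by a short self-contained induction whose base cases are the trivial values $Z_1(P_3)=Z_0(P_1)=1$ and $Z_1(P_4)=Z_0(P_2)=2$. Your index bookkeeping is sound throughout: the $Z_1$ path recursion needs $m\geq 3$ and you use it at $m=n-2\geq 5$; the $Z_0$ recursion needs index $\geq 2$ and you use it at $n-4\geq 3$ and at $m-2\geq 2$. What your approach buys is transparency and reusability --- no appeal to computed tables, and an auxiliary inequality in the spirit of Lemma \ref{Lem:Tech0} that could serve elsewhere; what the paper's approach buys is uniformity, since the identical induction template disposes of all three technical lemmas in Section \ref{Sec:Max} without introducing any new intermediate statement.
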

\begin{proof}
We proceed by induction on $n \ge 7$. If $n=7$, we have
$$
Z_1(P_{4})+Z_0(P_{3})+Z_0(P_{4})=2+3+5
\leq 5+5= Z_1(P_{5})+Z_0(P_{4}).
$$
If $n=8$, we have
$$
Z_1(P_{5})+Z_0(P_{4})+Z_0(P_{5})=5+5+7
\leq 10+ 7= Z_1(P_{6})+Z_0(P_{5}).
$$
For the induction assumption, suppose that the inequality holds for $n=k\geq 8$. Suppose now that $n=k+1$. Then we have
\begin{align*}
&Z_1(P_{n-3})+Z_0(P_{n-4})+Z_0(P_{n-3})\\
&= Z_1(P_{n-4})+Z_1(P_{n-5})+Z_0(P_{n-6})+Z_0(P_{n-5})+Z_0(P_{n-6})+Z_0(P_{n-4})+Z_0(P_{n-5})\\
&= Z_1(P_{n-4})+Z_0(P_{n-5})+Z_0(P_{n-4})+
Z_1(P_{n-5})+Z_0(P_{n-6})+Z_0(P_{n-5})+Z_0(P_{n-6})\\
&= Z_1(P_{(n-1)-3})+Z_0(P_{(n-1)-4})+Z_0(P_{(n-1)-3})+\\
&+Z_1(P_{(n-2)-3})+Z_0(P_{(n-2)-4})+Z_0(P_{(n-2)-3})+Z_0(P_{n-6})\\
&\leq Z_1(P_{(n-1)-2})+Z_0(P_{(n-1)-3})
+ Z_1(P_{(n-2)-2})+Z_0(P_{(n-2)-3})+Z_0(P_{(n-2)-3})\\
&= Z_1(P_{n-2})+Z_0(P_{n-3}).
\end{align*}
as required.
\end{proof}
\begin{lem}
\label{Lem:Tech4}
For any integer $n\geq 7$, we have
$$
Z_1(P_{n-4})+Z_0(P_{n-5})+2Z_0(P_{n-4})
\leq Z_1(P_{n-2})+Z_0(P_{n-3}).
$$
\end{lem}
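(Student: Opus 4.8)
The plan is to prove Lemma \ref{Lem:Tech4} by induction on $n\geq 7$, mirroring the structure of the preceding two lemmas (Lemmas \ref{Lem:Tech3} and \ref{LEm:Tech1}). The main tool throughout will be the standard recurrences $Z_0(P_m)=Z_0(P_{m-1})+Z_0(P_{m-2})$ and $Z_1(P_m)=Z_1(P_{m-1})+Z_1(P_{m-2})+Z_0(P_{m-3})$, the latter coming from the recursive formula in Lemma \ref{Lem:Rec} specialized to a path (splitting on whether the last $P_3$ is used). I would keep everything in terms of $Z_0$ and $Z_1$ of paths rather than passing to closed forms, since the previous lemmas establish exactly the kind of mixed inequalities I will want to invoke.

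First I would dispose of the base cases. Using the convention $Z_1(P_t)=0$ for $t\leq 2$ and the small values $Z_0(P_0)=Z_0(P_1)=1$, $Z_0(P_2)=2$, $Z_0(P_3)=3$, $Z_0(P_4)=5$, $Z_0(P_5)=7$, together with $Z_1(P_3)=1$, $Z_1(P_4)=2$, $Z_1(P_5)=5$, $Z_1(P_6)=10$, I would check $n=7$ and $n=8$ directly. For instance, at $n=7$ the left side is $Z_1(P_3)+Z_0(P_2)+2Z_0(P_3)=1+2+6=9$ and the right side is $Z_1(P_5)+Z_0(P_4)=5+5=10$, and at $n=8$ one gets $2+3+10=15$ on the left against $10+7=17$ on the right; both hold with a little room to spare, which is reassuring for the inductive step.

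For the inductive step, assuming $n=k+1$ with the claim known for all smaller indices down to $7$, I would expand every term on the left-hand side one step using the two recurrences, regroup the resulting pieces into two blocks that are instances of the inequality at $n-1$ and at $n-2$, and bound each block by the corresponding right-hand side via the induction hypothesis. The leftover terms should then reassemble, again via the recurrences, into the target right-hand side $Z_1(P_{n-2})+Z_0(P_{n-3})$. This is precisely the bookkeeping pattern used in the proof of Lemma \ref{Lem:Tech3}: split, apply the hypothesis twice, and collapse the remainder. An alternative, should the direct split prove awkward, is to derive Lemma \ref{Lem:Tech4} from Lemma \ref{Lem:Tech3} by noting that the two statements differ only in replacing one $Z_0(P_{n-3})$ contribution by $Z_0(P_{n-4})+Z_0(P_{n-5})$ after one recurrence step, which may let me reuse Lemma \ref{Lem:Tech3} almost verbatim.

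The main obstacle I anticipate is purely in the regrouping: after expanding, I must choose exactly which expanded terms belong to the "$n-1$ block" and which to the "$n-2$ block" so that both blocks match the left-hand side shape of the induction hypothesis, and so that the residual terms telescope cleanly back to $Z_1(P_{n-2})+Z_0(P_{n-3})$ rather than leaving an uncontrolled surplus. Because the inequality is not tight, there is slack to absorb a harmless leftover nonnegative term (an extra $Z_0(P_\cdot)$), exactly as happens at the end of the Lemma \ref{Lem:Tech3} computation; the care needed is to verify that the surplus lands on the correct side of the inequality and that all path indices stay within the range where the induction hypothesis and the conventions $Z_1(P_t)=0$ for $t\leq 2$ apply.
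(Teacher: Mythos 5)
Your proposal is correct and matches the paper's own proof essentially step for step: the same base cases $n=7,8$ with the same numerical values, the same expansion of the left-hand side via the path recurrences for $Z_0$ and $Z_1$, the same regrouping into copies of the inequality at $n-1$ and $n-2$ bounded by the induction hypothesis, and the same absorption of the leftover $Z_0(P_{n-7})$ term into the slack $Z_0(P_{n-5})$ before reassembling the right-hand side. Indeed, your explicit remark about where the surplus term must land is handled more carefully in your plan than in the paper, whose final displayed step silently drops the leftover and asserts an equality that is really an inequality with exactly that slack.
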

\begin{proof}
We proceed by induction on $n \ge 7$. For $n=7$, we have
$$
Z_1(P_{3})+Z_0(P_{2})+2Z_0(P_{3})
=1+2+2\times 3\leq 5+5 = Z_1(P_{5})+Z_0(P_{4}).
$$
For $n=8$, we have
$$
Z_1(P_{4})+Z_0(P_{3})+2Z_0(P_{4})
=2+3+2\times 5\leq 10+7 =Z_1(P_{6})+Z_0(P_{5}).
$$
For the induction assumption, suppose that the inequality holds for $n=k\geq 8$. Suppose now that $n=k+1$. Then we have
\begin{align*}
&Z_1(P_{n-4})+Z_0(P_{n-5})+2Z_0(P_{n-4})\\
&= Z_1(P_{n-5})+Z_1(P_{n-6})+Z_0(n-7)+Z_0(P_{n-6})+Z_0(P_{n-7})+2Z_0(P_{n-5})+2Z_0(P_{n-6})\\
&= Z_1(P_{n-5})+Z_0(P_{n-6})+2Z_0(P_{n-5})+Z_1(P_{n-6})+Z_0(P_{n-7})+2Z_0(P_{n-6}) +Z_0(P_{n-7})\\
&= Z_1(P_{(n-1)-4})+Z_0(P_{(n-1)-5})+2Z_0(P_{(n-1)-4})\\
&+Z_1(P_{(n-2)-4})+Z_0(P_{(n-2)-5})+2Z_0(P_{(n-2)-4})+Z_0(P_{(n-3)-4})\\
&\leq  Z_1(P_{(n-1)-2})+Z_0(P_{(n-1)-3})+Z_1(P_{(n-2)-2})+Z_0(P_{(n-2)-3})\\
&= Z_1(P_{n-2})+Z_0(P_{n-3}),
\end{align*}
as required.
\end{proof}

Since adding an edge can only increase $Z_1$ or keep it unchanged, for any forest $F$, there is a tree $T$ of the same order such that $Z_1(T)\geq Z_1(F)$.

The following lemma is well-known
\begin{lem}[\cite{Gutman1977AcyclicSW}]
\label{Lem:Z0MaxPat}
For any forest $F$ with order $n$ we have
$Z_0(F)\leq Z_0(P_n)$.
\end{lem}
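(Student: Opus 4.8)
The plan is to prove this well-known fact by a short induction on the order $n$, using the standard leaf-deletion recursion for the number of independent edge subsets. If $v$ is a leaf with unique neighbour $u$, then every independent edge subset of $F$ either avoids the edge $uv$ or contains it; counting the two cases separately yields $Z_0(F)=Z_0(F-v)+Z_0(F-u-v)$, where the first term is $Z_0(F-v)$ because the independent edge subsets avoiding $uv$ are exactly those of $F-v$ (the now-isolated $v$ being irrelevant to $Z_0$ by Theorem~\ref{thm:sigma-union-is-product-of-sigmas}), and the second term counts those containing $uv$. Applying the same recursion to $P_n$ itself gives the Fibonacci-type identity $Z_0(P_n)=Z_0(P_{n-1})+Z_0(P_{n-2})$, which is in any case immediate from the closed form in Theorem~\ref{thm:sigma-of-a-path-by-eric}.

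For the induction I would take as base cases $n\le 1$, where the only forest is edgeless and $Z_0(F)=1=Z_0(P_n)$. Assuming the bound for all orders smaller than $n$, let $F$ be a forest of order $n\ge 2$. If $F$ has no edge then $Z_0(F)=1\le Z_0(P_n)$ and we are done. Otherwise $F$ contains a non-trivial tree component, hence a leaf $v$ with neighbour $u$. Since $F-v$ and $F-u-v$ are forests of orders $n-1$ and $n-2$ respectively, the induction hypothesis combined with the leaf recursion gives
$$Z_0(F)=Z_0(F-v)+Z_0(F-u-v)\le Z_0(P_{n-1})+Z_0(P_{n-2})=Z_0(P_n),$$
which completes the inductive step.

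There is essentially no hard step here; the result is classical and the induction is clean. The only points requiring a little care are the correct treatment of the edgeless case and, more generally, of isolated vertices, which are harmless by the product formula of Theorem~\ref{thm:sigma-union-is-product-of-sigmas}, together with the elementary observation that whenever $F$ has an edge it must contain a leaf, so that the leaf-deletion recursion is always applicable. Since the inequality propagates additively through the recursion and the path saturates it at every step, the statement as phrased requires no separate equality analysis.
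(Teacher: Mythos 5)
Your proof is correct, but there is nothing in the paper to compare it against: the paper states this lemma as well known and cites it to the literature (Gutman, 1977) without giving any proof, so you have supplied an argument where the paper has none. Your argument is the standard classical one: the leaf-deletion recursion $Z_0(F)=Z_0(F-v)+Z_0(F-u-v)$, the matching recursion $Z_0(P_n)=Z_0(P_{n-1})+Z_0(P_{n-2})$ for the path, and strong induction on the order. All the steps check out, including the points that actually need care: isolated vertices are harmless for $Z_0$ (via Theorem~\ref{thm:sigma-union-is-product-of-sigmas}), the edgeless case is treated separately, every forest with an edge has a leaf, and with the paper's conventions $Z_0(P_0)=Z_0(P_1)=1$ the Fibonacci-type recursion is valid from $n=2$ onward, so the base cases $n\le 1$ suffice. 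The approach also fits the paper's own toolkit, since it is the $Z_0$ analogue of the recursive counting the authors use for $Z_1$ (Lemma~\ref{Lem:Rec} and the pseudo-leaf identity). The only remark worth adding is that the cited result is in fact stronger than what you prove: among forests of order $n$ the path is the \emph{unique} maximizer of $Z_0$, a fact the paper implicitly leans on elsewhere (e.g.\ the strict inequalities in Lemma~\ref{Lem:Z1Ironing} and Lemma~\ref{Lem:RedBrancNumb} rely on strict versions of such extremal statements); but for the lemma exactly as stated, your inequality-only induction is complete and your decision to skip the equality analysis is justified.
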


We are now ready to present a proof of a characterisation of the forest of order $n\geq 9$ that has the largest $Z_1$.

\begin{thm}
\label{Thm:Path_Max_Forest}
Among all forests $F$ of order $n\geq 9$, we have 
$Z_1(F)\leq Z_1(P_n)$, with equality if and only if $F$ is $P_n$.
\end{thm}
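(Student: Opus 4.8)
The plan is to reduce the statement about forests to a statement about trees, then to prove the tree case by induction on $n$ using the leaf-deletion recursion from Lemma \ref{Lem:Rec}, comparing each term against the corresponding term for the path $P_n$. By the remark preceding Lemma \ref{Lem:Z0MaxPat}, every forest $F$ of order $n$ is dominated in $Z_1$ by some tree $T$ of the same order, so it suffices to show $Z_1(T)\le Z_1(P_n)$ for all trees $T$, with equality only when $T=P_n$. Here one must be slightly careful: the reduction from forest to tree adds edges, and I would need to argue that if $F$ is a forest that is not already the path $P_n$, then the dominating tree $T$ is either strictly better (so the inequality is strict) or equals $P_n$, which already forces $F=P_n$; this bookkeeping handles the equality clause.

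For the tree case I would set up induction on $n$, with the base cases $n=9$ and $n=10$ verified by the exhaustive tables (Table \ref{table:trees-of-order-9} and Table \ref{table:trees-of-order-10}) referenced earlier. For the inductive step, take $T$ a tree of order $n\ge 11$ that is not $P_n$, pick a pseudo-leaf $v$ of degree $d$, a leaf neighbour $z$ of $v$, and a neighbour $u$ of $v$ of largest degree, and apply the specialized recursion
$$
Z_1(T)=Z_1(T-z)+Z_1(T-z-v)+Z_0(T-N[v])+(d-2)Z_0(T-N[v]\setminus\{u\}).
$$
Each of the subgraphs $T-z$, $T-z-v$, and the various $Z_0$-terms is a forest of smaller order, so I can bound $Z_1(T-z)\le Z_1(P_{n-1})$ by induction and bound every $Z_0$-term by the corresponding path value via Lemma \ref{Lem:Z0MaxPat}. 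When $d=2$ the tree $T$ is a path or a path with a single branch, and one reaches $Z_1(P_n)=Z_1(P_{n-1})+Z_1(P_{n-2})+Z_0(P_{n-3})$ exactly (this is the recursion specialised to an end of the path), giving equality precisely for $T=P_n$. The heart of the argument is the case $d\ge 3$, where branching must be shown to be strictly suboptimal.

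This is where Lemmas \ref{Lem:Tech0}, \ref{LEm:Tech1}, \ref{Lem:Tech3}, and \ref{Lem:Tech4} enter. After bounding all terms by path quantities, the inductive target becomes an inequality of the shape
$$
Z_1(P_{n-1})+(d-1)Z_0(P_{n-d})+Z_1(P_{n-d})\;\le\; Z_1(P_{n-1})+Z_1(P_{n-2})+Z_0(P_{n-3}),
$$
i.e.\ exactly the content of Lemma \ref{LEm:Tech1} for $d\ge 5$; the remaining small branching degrees $d=3$ and $d=4$ are dispatched by Lemmas \ref{Lem:Tech3} and \ref{Lem:Tech4} respectively, which are tailored to those two residual cases (their left-hand sides $Z_1(P_{n-3})+Z_0(P_{n-4})+Z_0(P_{n-3})$ and $Z_1(P_{n-4})+Z_0(P_{n-5})+2Z_0(P_{n-4})$ match the $d=3,4$ specialisations of the recursion). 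The main obstacle I anticipate is not any single inequality but the careful alignment of indices: one must verify that the subgraphs obtained after deleting $N[v]$ or $N[v]\setminus\{u\}$ really do have the orders $n-d-1$ and $n-d$ claimed, so that the path bounds plug in correctly, and one must track when each bound is strict (either the induction hypothesis is strict because $T-z\ne P_{n-1}$, or a technical lemma is strict, or $Z_0(\text{forest})<Z_0(\text{path})$) to conclude that equality throughout forces $d=2$ and ultimately $T=P_n$.
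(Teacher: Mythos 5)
Your proposal matches the paper's own proof essentially step for step: the same induction on $n$ with base cases $n=9,10$ taken from Tables \ref{table:trees-of-order-9} and \ref{table:trees-of-order-10}, the same pseudo-leaf recursion from Lemma \ref{Lem:Rec}, the same case split on the degree $d$ (direct comparison for $d=2$, Lemma \ref{Lem:Tech3} for $d=3$, Lemma \ref{Lem:Tech4} for $d=4$, Lemma \ref{LEm:Tech1} for $d\ge 5$), and the same use of Lemma \ref{Lem:Z0MaxPat} to bound the $Z_0$-terms. The only cosmetic difference is that you first reduce forests to trees, whereas the paper runs the identical induction directly on forests (a pseudo-leaf exists in any forest with an edge), which avoids the tree-versus-forest bookkeeping you flag; in substance the two arguments are the same.
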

\begin{proof}
We use an induction on $n$. The basis case of $n=9,10$ can be seen on the Tables \ref{table:trees-of-order-9} and \ref{table:trees-of-order-10}. Suppose that the claim holds for $n=k\geq 10$. We now consider the case of $n=k+1$. Suppose that $v$ is a pseudo-leaf of $F$ of degree $d$. Let $u$ be a neighbor of $v$ having the largest degree. If $v$ has a neighbour that is not a leaf, then it is $u$. Let $z$ be a leaf neighbour of $v$. 


{\bf Case 1:} Suppose that $d=2$. With the use of Lemma \ref{Lem:Z0MaxPat}, we have
\begin{align*}
Z_1(F)
&=Z_1(F-z)+Z_1(F-z-v) + Z_0(F-v-u-z)\\
&\leq Z_1(P_{n-1})+Z_1(P_{n-2})+Z_0(P_{n-3})
=Z_1(P_n).
\end{align*}

{\bf Case 2:} Suppose that $d=3.$ Let $x$ be the leaf adjacent to $v$ other than $z$. Then, using Lemma \ref{Lem:Tech3}, we have
\begin{align*}
Z_1(F)
&=Z_1(F-z)+Z_1(F-z-v) + Z_0(F-v-u-z)+ Z_0(F-v-x-z)\\
&=Z_1(F-z)+Z_1(F-z-v-x) + Z_0(F-v-u-z-x)+ Z_0(F-v-x-z)\\
&\leq Z_1(P_{n-1})+Z_1(P_{n-3})+Z_0(P_{n-4})+Z_0(P_{n-3})\\
&\leq Z_1(P_{n-1})+Z_1(P_{n-2})+Z_0(P_{n-3})=Z_1(P_n).
\end{align*}

{\bf Case 3:} Suppose that $d=4.$ Let $x$ and $y$ be the two leaves adjacent to $v$ other than $z$. Then, using Lemma \ref{Lem:Tech4}, we have
\begin{align*}
&Z_1(F)\\
&=Z_1(F-z)+Z_1(F-z-v) + Z_0(F-v-u-z)+ Z_0(F-v-x-z)+Z_0(F-v-y-z)\\
&=Z_1(F-z)+Z_1(F-z-v-x-y) + Z_0(F-v-u-z-x-y)+ 2Z_0(F-v-x-z-y)\\
&\leq Z_1(P_{n-1})+Z_1(P_{n-4})+Z_0(P_{n-5})+2Z_0(P_{n-4})\\
&\leq Z_1(P_{n-1})+Z_1(P_{n-2})+Z_0(P_{n-3})=Z_1(P_n).
\end{align*}

{\bf Case 4:} Suppose that $d\geq 5.$ By counting the $1$-nearly independent edge subsets without $v$, with $v$ in a $P_2$ and then with $v$ in a $P_3$, we have
\begin{align*}
Z_1(F)
&=Z_1(F-z)+ Z_1(F-(N[v]\setminus \{u\}))
+ Z_0(F-N[v])+(d-2)Z_0(F-(N(v)\setminus \{u\}))\\
&\leq Z_1(F-z)+ Z_1(F-(N[v]\setminus \{u\}))
+(d-1)Z_0(F-(N[v]\setminus \{u\}))\\
&\leq Z_1(P_{n-1})+ Z_1(P_{n-d})
+(d-1)Z_0(P_{n-d})\\
&\leq Z_1(P_{n-1})+ Z_1(P_{n-2})
+Z_0(P_{n-3})\qquad\text{(using Lemma \ref{LEm:Tech1})}\\
&=Z_1(P_n),
\end{align*}
completing the proof.
\end{proof}




From now, we aim to find out which $n$-vertex tree has the second-largest $Z_1$. A series of lemmas is needed.

We write $[T_1,\dots,T_j]$ for the rooted tree, where the branches of the root vertex $v$ are the rooted trees $T_1,\dots,T_j$, such that the root of each of $T_1,\dots,T_j$ is adjacent to $v$. The following lemma is well-known under the name of Ironing Lemma. It means that replacing a non-path branch by a path branch increases $Z_0$.

\begin{lem}[\cite{GUTMAN20122177}]
\label{Lem:Z0Ironing}
For any rooted  tree $T_1,\dots,T_j$, we have
$$
Z_0([T_1,\dots,T_j])<Z_0([P_{|T_1|},T_2,\dots,T_j])
$$
if $T_1$ is not a path rooted at one of its end-vertices.
\end{lem}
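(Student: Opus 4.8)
The plan is to peel the branch $T_1$ off the root and express $Z_0$ of the whole tree as a linear combination of two matching-counts of $T_1$, where the coefficients depend only on the untouched part. Write $G=[T_1,\dots,T_j]$ with root $v$, let $r_1$ be the root of $T_1$ (the neighbour of $v$), and let $R=[T_2,\dots,T_j]$ be the rooted tree obtained by deleting the whole branch $T_1$. Set $A=Z_0(R)$ and $B=Z_0(R-v)$, both strictly positive. Conditioning on whether the edge $vr_1$ lies in a matching, and applying Theorem \ref{thm:sigma-union-is-product-of-sigmas} to the resulting disjoint unions, gives
$$
Z_0(G)=A\,Z_0(T_1)+B\,Z_0(T_1-r_1).
$$
Replacing $T_1$ by $P_{|T_1|}$ rooted at an end-vertex yields $G'=[P_{|T_1|},T_2,\dots,T_j]$ with the \emph{same} $A$ and $B$, so that, writing $p=Z_0(T_1)$, $q=Z_0(T_1-r_1)$, $p'=Z_0(P_{|T_1|})$ and $q'=Z_0(P_{|T_1|-1})$ (deleting an end root of a path leaves $P_{|T_1|-1}$), the whole task reduces to showing
$$
Z_0(G')-Z_0(G)=A(p'-p)+B(q'-q)>0.
$$

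Next I would bound the two differences separately using Lemma \ref{Lem:Z0MaxPat}. Since $T_1$ is a tree of order $|T_1|$, we have $p=Z_0(T_1)\le Z_0(P_{|T_1|})=p'$; since $T_1-r_1$ is a forest of order $|T_1|-1$, we have $q=Z_0(T_1-r_1)\le Z_0(P_{|T_1|-1})=q'$. As $A,B>0$, both differences are nonnegative, so $Z_0(G)\le Z_0(G')$, and everything comes down to ruling out the case where both are simultaneously equalities.

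This equality analysis is where the hypothesis on $T_1$ enters, and it is the crux of the argument. Here I would invoke the (well-known, strict) form of Lemma \ref{Lem:Z0MaxPat}, namely that among forests of a given order the path is the \emph{unique} maximiser of $Z_0$. Then $p=p'$ forces $T_1\cong P_{|T_1|}$ as an unrooted tree, while $q=q'$ forces $T_1-r_1\cong P_{|T_1|-1}$, i.e. removing the root leaves a single connected path. A path whose deletion at $r_1$ stays connected must have $r_1$ as an end-vertex, since deleting an interior vertex of a path of order at least $3$ disconnects it, and for orders at most $2$ every vertex is already an end-vertex. Hence $p=p'$ and $q=q'$ together force $T_1$ to be a path rooted at one of its end-vertices, which is exactly the excluded configuration. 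Therefore at least one of the two differences is strict, yielding $Z_0(G)<Z_0(G')$.

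The only delicate point, and the step I expect to require the most care, is that Lemma \ref{Lem:Z0MaxPat} is recorded only as a non-strict inequality, whereas the argument needs uniqueness of $P_n$ as the maximiser, both among trees (for the $p$-term) and among forests (for the $q$-term). I would either cite the standard strengthening from \cite{Gutman1977AcyclicSW} or insert a short remark establishing uniqueness: adding any edge to a forest strictly increases $Z_0$, so a maximiser must be a tree, and among trees $P_n$ is the unique maximiser. With that in hand the proof is complete.
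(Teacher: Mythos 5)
Your proof is correct, but note that the paper never actually proves this lemma: it is imported from the literature via the citation to \cite{GUTMAN20122177}, and the only ironing-type argument written out in the paper is for the $Z_1$-analogue, Lemma~\ref{Lem:Z1Ironing}. So the comparison here is between your self-contained argument and a citation. Your route --- conditioning on the edge $vr_1$ to get $Z_0(G)=A\,Z_0(T_1)+B\,Z_0(T_1-r_1)$ with $A=Z_0([T_2,\dots,T_j])$ and $B=Z_0([T_2,\dots,T_j]-v)$, then comparing coefficientwise against the rooted path --- is sound: the decomposition is exactly the matching recursion combined with multiplicativity over components (Theorem~\ref{thm:sigma-union-is-product-of-sigmas}), both coefficients are at least $1$, and your reduction of the equality case to ``$T_1$ is a path and $T_1-r_1$ is connected, hence the root is an end-vertex'' is airtight, including the degenerate orders $|T_1|\le 2$. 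The one external input you need, as you correctly flag, is the strict (uniqueness) form of Lemma~\ref{Lem:Z0MaxPat}: among forests of a given order the path is the \emph{unique} maximizer of $Z_0$. The paper records only the weak inequality, so your proof is not complete from the paper's stated tools alone; but the strengthening is classical, consistent with \cite{Gutman1977AcyclicSW}, and admits a short independent proof (a maximizing forest must be connected, since joining two components by an edge strictly increases $Z_0$, and uniqueness among trees follows by leaf-removal induction), so there is no circularity with the lemma you are proving. It is also worth observing why the paper's proof of the $Z_1$-version is so much heavier: your two-coefficient argument is available for $Z_0$ precisely because $Z_0$ is multiplicative over components, a property $Z_1$ lacks; the elaborate decomposition and induction on the number of branches in Lemma~\ref{Lem:Z1Ironing} is what compensates for that failure.
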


We now provide an ironing lemma for $Z_1$. Replacing a branch that is not a path by a path of the same order increases $Z_1$. 

\begin{lem}
\label{Lem:Z1Ironing}
For any rooted  trees $T_1,\dots,T_j$, we have
$$
Z_1([T_1,\dots,T_j])<Z_1([P_{|T_1|},T_2,\dots,T_j])
$$
if $T_1$ is not a path rooted at one of its end-vertices.
\end{lem}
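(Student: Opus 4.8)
The plan is to mirror the structure of the $Z_0$ Ironing Lemma (Lemma \ref{Lem:Z0Ironing}) but using the recursive decomposition for $Z_1$ supplied by Lemma \ref{Lem:Rec}. Let $v$ denote the root of the whole tree, and let $w$ be the root of the branch $T_1$, so that $w\in N(v)$. The key observation is that the $1$-nearly independent edge subsets split into those edges lying entirely inside $T_1$ (together with the edge $vw$) versus the rest of the tree $[T_2,\dots,T_j]$, and that a replacement of $T_1$ by $P_{|T_1|}$ only alters the contribution coming from $T_1$. I would therefore try to express $Z_1([T_1,\dots,T_j])$ as a sum of products, where one factor records a local count inside the branch $T_1$ (the number of ways to pick edges of $T_1$, possibly including $vw$, forming the required adjacency pattern) and the other factor records a count in the complementary part $[T_2,\dots,T_j]\cup\{v\}$.

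Concretely, I would partition the $1$-nearly-independent subsets $S$ by how the single adjacent pair is situated relative to the branch $T_1$. There are three regimes: (a) the adjacent pair and all edges of $S$ avoid $T_1$ entirely; (b) the adjacent pair lies outside $T_1$ while $S$ still uses some independent edges drawn from $T_1\cup\{vw\}$; and (c) the adjacent pair itself is realised using at least one edge of $T_1\cup\{vw\}$. Cases (a) and (b) produce factors of the form $Z_0(T_1)$ or $Z_0(T_1\cup\{vw\})$ multiplied against $Z_0$ or $Z_1$ quantities of the complement, and by the $Z_0$ Ironing Lemma each such $Z_0$ factor for $T_1$ is strictly dominated (or at least not increased) when $T_1$ is replaced by $P_{|T_1|}$. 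Case (c) produces a factor that is itself a $Z_1$-type count internal to $T_1\cup\{vw\}$ against a $Z_0$ count of the complement; here I would need a secondary induction, essentially the statement of the present lemma applied to the branch $T_1$ rooted at $w$, to push the internal $Z_1$ upward under ironing.

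The cleanest route is probably induction on $|T_1|$, using Lemma \ref{Lem:Rec} with $z$ chosen as a leaf of $T_1$ that is farthest from $w$ (so that detaching it does not touch the rest of the tree), reducing $T_1$ to a strictly smaller branch while leaving $T_2,\dots,T_j$ fixed. Each term produced by the recursion is a $Z_1$ of a tree whose $T_1$-part is smaller, or a $Z_0$ of such a tree, so the inductive hypothesis together with the monotonicity of $Z_0$ under ironing (Lemma \ref{Lem:Z0Ironing}, and its non-strict analogue Lemma \ref{Lem:Z0MaxPat} for the degenerate small cases) should close the argument. I would keep careful track of which inequality is strict: since $T_1$ is assumed not to be a path rooted at an end-vertex, at least one of the decomposed terms must feed into a \emph{strict} instance of the $Z_0$ Ironing Lemma, and that single strict gain is what upgrades the overall inequality from $\le$ to $<$.

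The main obstacle I anticipate is bookkeeping the contributions so that the $Z_0$-factors on the branch genuinely separate from the complementary factors; the adjacency pattern in case (c) can straddle the edge $vw$, i.e. the ``one adjacent pair'' might consist of $vw$ together with another edge of $T_1$ at $w$ or another edge at $v$ in the complement, so the factorisation is not perfectly clean and some cross terms appear. Handling those cross terms — where the unique adjacency uses the root edge $vw$ and one edge on each side — is the delicate point, because there the branch contributes via both a $Z_0$ count and a ``select-one-edge-at-$w$'' count, and I must verify that replacing $T_1$ by the path does not decrease the number of edges of $T_1$ incident to $w$ available to form that pair while simultaneously not decreasing the relevant $Z_0$ factor. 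I expect that choosing the root of $P_{|T_1|}$ to be an end-vertex (degree one) makes this worst-case manageable, since it minimises the local degree at $w$ and lets the global $Z_0$ gain dominate, but confirming the inequality in exactly this straddling case is where the real work lies.
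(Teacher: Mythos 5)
Your proposal has a genuine gap, and it sits exactly where the argument needs to close. In your main route (induction on $|T_1|$, applying Lemma~\ref{Lem:Rec} at a deepest leaf $z$ of $T_1$ with neighbour $y$), writing $G=[T_1,\dots,T_j]$, the recursion gives
$$
Z_1(G)=Z_1(G-z)+Z_1(G-z-y)+\sum_{x\in N(y)\setminus\{z\}}Z_0(G-z-y-x),
$$
and the last sum has $\deg(y)-1$ terms, one for every $P_3$ through $z$. The quantity you must not exceed, $Z_1([P_{|T_1|},T_2,\dots,T_j])$, expands (recursion at the far end of its path branch) into exactly three terms,
$$
Z_1([P_{|T_1|-1},T_2,\dots,T_j])+Z_1([P_{|T_1|-2},T_2,\dots,T_j])+Z_0([P_{|T_1|-3},T_2,\dots,T_j]),
$$
with a \emph{single} $Z_0$ summand. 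When $\deg(y)=2$ your plan works: termwise application of the induction hypothesis and $Z_0$-ironing matches the three terms. But when $\deg(y)\ge 3$ you produce two or more $Z_0$-summands against one, and ``induction hypothesis plus monotonicity of $Z_0$ under ironing'' cannot absorb the surplus: you need inequalities of the shape
$$
Z_1([P_{n-d},T_2,\dots,T_j])+(d-1)Z_0([P_{n-d},T_2,\dots,T_j])\le Z_1([P_{n-2},T_2,\dots,T_j])+Z_0([P_{n-3},T_2,\dots,T_j]),
$$
that is, the analogues of Lemmas~\ref{LEm:Tech1}, \ref{Lem:Tech3} and \ref{Lem:Tech4} with the branches $T_2,\dots,T_j$ attached. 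These are nontrivial (the paper proves them only for bare paths, each by its own induction), and you neither state nor prove them; the base case of your induction ($T_1$ a star rooted at its centre) is also left unaddressed. Note too that you diagnose the danger as the adjacent pair ``straddling'' the root edge $vw$; with the recursion anchored at a deepest leaf of $T_1$ that is not where the problem lies --- it is the multiplicity of $P_3$'s at a high-degree neighbour $y$.

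For comparison, the paper's proof avoids all of this by inducting on the number of branches $j$, not on $|T_1|$: the base case $j=1$ is exactly Theorem~\ref{Thm:Path_Max_Forest}, and for $j\ge 2$ the recursion of Lemma~\ref{Lem:Rec} is applied at $v_2$, the root of the \emph{second} branch $T_2$. Since deleting $v_2$ (or $v_2$ together with $v$) severs the pieces of $T_2$ from the rest, every term in the expansion factors as a quantity on pieces of $T_2$ times a quantity on a tree that contains $T_1$ intact but has only $j-1$ branches, or on $T_1$ alone; replacing $T_1$ by $P_{|T_1|}$ in each factor is then licensed by the induction hypothesis, Lemma~\ref{Lem:Z0Ironing}, and the $j=1$ case, and the sum reassembles to $Z_1([P_{|T_1|},T_2,\dots,T_j])$ with no new technical lemmas. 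If you want to salvage your route, you must formulate and prove the attached-branch versions of the technical lemmas above; the paper's choice of where to apply the recursion is precisely what makes that unnecessary.
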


\begin{proof}
We proceed by induction on $j$. If $j=1$, then $[P_{|T_1|}]=P_{|T_1|+1}$ and the claim holds by Theorem \ref{Thm:Path_Max_Forest}. Suppose that the claim holds whenever $j=k$ for some $k\geq 1$ and let us consider the case where $j=k+1$. Let $v$ be the root of $T=[T_1,\dots,T_j]$ and $v_i$ is the neighbour of $v$ in $T_i$ for any $i$. Suppose that $T_1$ is not a path rooted at one of its end-vertices. Then $|T_1|\geq 3$. In the equation below, we use Lemmas \ref{Lem:Z0Ironing} and \ref{Lem:Z1Ironing}, and the induction assumption to replace $T_1$ with $P_{|T_1|}$.
\begin{align*}
&Z_1(T)\\
&= Z_1(T-v_2) + Z_1 ((T_2-v_2)\cup (T-T_2-v))
+\sum_{x\in N_{T_2}(v_2)}Z_1(T-v_2-x)\\
&+\sum_{i\in \{1,3,4,\dots,\ell\}}Z_0(T-v_2-v-v_i)
+\sum_{x\in N_{T_2}(v_2)}Z_0(T-x-v_2-v)\\
&+\sum_{P\in \mathbb{P}_{T_2}(v_2)}Z_0(T-P)\\
&=Z_1([T_1,T_3,T_4,\dots,T_j])Z_0(T_2-v_2)
+ Z_0([T_1,T_3,T_4,\dots,T_j])Z_1(T_2-v_2)\\
&+Z_1(T_2-v_2)Z_0(T_1)\prod_{i=3}^jZ_0(T_i)
+Z_0(T_2-v_2)Z_1(T_1)\prod_{i=3}^jZ_0(T_i)
+Z_0(T_2-v_2)Z_0(T_1)Z_1(\bigcup_{i=3}^jT_i)\\
&+\sum_{x\in N_{T_2}(v_2)}Z_1(T_2-v_2-x)Z_0([T_1,T_3,T_4,\dots,T_j]) 
+Z_0(T_2-v_2-x)Z_1([T_1,T_3,T_4,\dots,T_j])\\
&+Z_0(T_1-v_1)Z_0(T-T_1-v_2-v)+\sum_{i\in \{3,4,\dots,\ell\}}Z_0(T_1)Z_0(T-T_1-v_2-v-v_i)\\
&+\sum_{x\in N_{T_2}(v_2)}Z_0(T_1)Z_0(T-T_1-x-v_2-v) +\sum_{P\in \mathbb{P}_{T_2}(v_2)}Z_0(T_2-P)Z_0(T_1)Z_0(T-T_1-T_2)\\
&<
Z_1([P_{|T_1|},T_3,T_4,\dots,T_j])Z_0(T_2-v_2)
+ Z_0([P_{|T_1|},T_3,T_4,\dots,T_j])Z_1(T_2-v_2)\\
&+Z_1(T_2-v_2)Z_0(P_{|T_1|})\prod_{i=3}^jZ_0(T_i)
+Z_0(T_2-v_2)Z_1(P_{|T_1|})\prod_{i=3}^jZ_0(T_i)\\
&+Z_0(T_2-v_2)Z_0(P_{|T_1|})Z_1(\bigcup_{i=3}^jT_i)
+\sum_{x\in N_{T_2}(v_2)}Z_1(T_2-v_2-x)Z_0([P_{|T_1|},T_3,T_4,\dots,T_j])\\ 
&+Z_0(T_2-v_2-x)Z_1([P_{|T_1|},T_3,T_4,\dots,T_j])
+Z_0(P_{|T_1-v_1|})Z_0(T-T_1-v_2-v)\\
&+\sum_{i\in \{3,4,\dots,\ell\}}Z_0(P_{|T_1|})Z_0(T-T_1-v_2-v-v_i)\\
&+\sum_{x\in N_{T_2}(v_2)}Z_0(P_{|T_1|})Z_0(T-T_1-x-v_2-v) +\sum_{P\in \mathbb{P}_{T_2}(v_2)}Z_0(T_2-P)Z_0(P_{|T_1|})Z_0(T-T_1-T_2)\\
&=Z_1([P_{|T_1|},T_2,\dots,T_j]),
\end{align*}
completing the proof.
\end{proof}

In view of Lemma \ref{Lem:Z1Ironing}, we can restrict to star-like trees when trying to find trees of given number of vertices and second-largest $Z_1$. A star-like tree is a tree with at most one vertex of degree greater than $2$. We write $[P_{n_1},\dots,P_{n_k}]$ for the star-like tree with $n_1+\dots+n_k+1$ vertices, $k$ branches where the $i$-th branches have length $n_i$ for all $i$.

In the following lemma, we replace two path branches with a single path rooted at one of its end.
\begin{lem}
\label{Lem:RedBrancNumb}
For any positive integers $n_1\geq \dots\geq n_j$ and $j\geq 3$, we have
$$
Z_1([P_{n_1},\dots,P_{n_j}])<Z_1([P_{n_1+n_2},P_{n_3},\dots,P_{n_j}]).
$$
\end{lem}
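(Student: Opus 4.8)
The plan is to evaluate $Z_1$ of each star-like tree by applying the recursion of Lemma~\ref{Lem:Rec} at its centre $v$. Write $T_{\mathrm{left}}=[P_{n_1},\dots,P_{n_j}]$ and $T_{\mathrm{right}}=[P_{n_1+n_2},P_{n_3},\dots,P_{n_j}]$, let $v_i$ be the neighbour of $v$ on the $i$-th branch, and abbreviate $a_i=Z_0(P_{n_i})$, $\hat a_i=Z_0(P_{n_i-1})$, $b_i=Z_1(P_{n_i})$, $\hat b_i=Z_1(P_{n_i-1})$. The one subtlety is that the set $\mathbb{P}(v)$ of $P_3$'s through $v$ has two types: those in which $v$ is the middle vertex (two edges at $v$) and those in which $v$ is an end-vertex (an edge $vv_i$ together with the first edge of branch $i$); both must be counted. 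This bookkeeping yields a closed form for $Z_1([P_{m_1},\dots,P_{m_t}])$ as a sum, over the subsets $S$ of edges incident with $v$, of products of the per-branch quantities; since any three edges at $v$ already create three adjacent pairs, only $|S|\le 2$ contributes. When $j=3$ the tree $T_{\mathrm{right}}$ is just the path $P_{n_1+n_2+n_3+1}$, so the claim is immediate from Theorem~\ref{Thm:Path_Max_Forest}; the real work is $j\ge 4$.

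Next I would exploit that the branches $P_{n_3},\dots,P_{n_j}$ are common to both trees. Collecting their contribution into a handful of non-negative ``moments''---roughly, the number of matchings of $\bigcup_{i\ge 3}P_{n_i}$, the number with exactly one internal adjacent pair, and the analogues with one or two of the edges $vv_i$ $(i\ge 3)$ pre-selected---the difference $Z_1(T_{\mathrm{right}})-Z_1(T_{\mathrm{left}})$ becomes a non-negative combination of these moments whose coefficients involve only the data of branches $1,2$ and of the merged path $P_{n_1+n_2}$. The decisive observation is that the subtree spanned by $v$ and branches $1,2$ is the \emph{same} path $P_{n_1+n_2+1}$ whether those branches hang separately at $v$ or are merged into one branch $P_{n_1+n_2}$. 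Hence the coefficients attached to the two heaviest moments (the bare matching-moment and the one-pair moment of the common part) agree on the two sides and cancel, and I expect the difference to collapse to the compact shape
\[
Z_1(T_{\mathrm{right}})-Z_1(T_{\mathrm{left}}) \;=\; \hat a_1\hat a_2\,(\mu_2+\mu_3) \;+\; \mu_1\,\Delta,
\]
where $\mu_1,\mu_2,\mu_3\ge 0$ are common-part moments and
\[
\Delta \;=\; Z_1(P_{n_1+n_2})+Z_0(P_{n_1+n_2-1}) - a_1b_2-a_2b_1-\hat a_1a_2-a_1\hat a_2 .
\]

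To evaluate $\Delta$ I would use the path-splitting identities $Z_0(P_{n_1+n_2})=a_1a_2+\hat a_1\hat a_2$ and its counterpart for $Z_1(P_{n_1+n_2})$, which rewrite $\Delta$ as a combination of $Z_0$ and $Z_1$ of $P_{n_1},P_{n_2}$ and their one-shorter versions---exactly the kind of expression controlled by Lemma~\ref{Lem:Tech0} and by the path inequalities in Lemmas~\ref{LEm:Tech1}, \ref{Lem:Tech3} and~\ref{Lem:Tech4}. Since $\hat a_1\hat a_2\ge 1$ and $\mu_1,\mu_2,\mu_3\ge 0$, the inequality is immediate whenever $\Delta\ge 0$.

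The hard part will be that $\Delta$ is \emph{not} sign-definite: when the smaller merged branch is short (say $n_2=1$) it can be negative, and indeed the asserted strict inequality fails outright for very small trees (for instance $Z_1([P_1,P_1,P_1])=3>2=Z_1([P_2,P_1])$). One therefore cannot argue moment-by-moment; positivity of the whole expression must be secured by bounding $\mu_1|\Delta|$ against $\hat a_1\hat a_2(\mu_2+\mu_3)$, that is, by playing the manifestly positive common-part moments against the residual, and this is exactly where the ambient size assumption of the section ($n\ge 9$) is needed. The finitely many remaining small configurations would then be cleared by direct computation, as in the base cases of Theorems~\ref{Thm:Star_Min_tree}--\ref{Thm:Path_Max_Forest}.
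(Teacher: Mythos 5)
Your proposal is not yet a proof: both of its load-bearing steps are missing. The ``moment'' decomposition at the centre and the collapse identity $Z_1(T_{\mathrm{right}})-Z_1(T_{\mathrm{left}})=\hat a_1\hat a_2(\mu_2+\mu_3)+\mu_1\Delta$ are asserted (``I expect \ldots'') rather than derived, the quantities $\mu_1,\mu_2,\mu_3$ are never defined precisely enough for the claimed cancellation to be checked, and the decisive estimate---controlling $\mu_1|\Delta|$ by $\hat a_1\hat a_2(\mu_2+\mu_3)$ when $\Delta<0$---is explicitly deferred, together with the finite verification of small cases. Since that estimate is exactly where the content of the lemma lies, the plan stops where the work begins. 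For comparison, the paper argues by induction on the order $n$: when $n_1\ge 2$ it applies Lemma \ref{Lem:Rec} at the end-leaf of the first branch, so that each side splits into two $Z_1$-terms of orders $n-1$ and $n-2$ plus one $Z_0$-term of order $n-3$; the induction hypothesis compares the $Z_1$-terms, Lemma \ref{Lem:Z0Ironing} (or, when $n_1=2$, a direct edge-deletion comparison) handles the $Z_0$-terms, and the base case $n=j+1$ (star versus broom) is referred to Theorem \ref{Thm:Star_Min_tree}. In that scheme no sign-indefinite quantity ever arises, so the obstruction you met is avoided rather than confronted.

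That said, your counterexample is correct, and it is the most valuable observation in your write-up: $Z_1([P_1,P_1,P_1])=Z_1(K_{1,3})=3>2=Z_1(P_4)=Z_1([P_2,P_1])$, so the lemma is false as stated (arbitrary positive $n_i$, with no lower bound on the order). The failure is not confined to that one instance: $Z_1(K_{1,4})=6>5=Z_1([P_2,P_1,P_1])$, $Z_1(K_{1,5})=10=Z_1([P_2,P_1,P_1,P_1])$ (equality, so strictness fails), and even with a non-star left-hand side, $Z_1([P_3,P_1,P_1])=11>10=Z_1(P_6)$. This pinpoints a genuine error in the paper's own argument as well: its base case invokes Theorem \ref{Thm:Star_Min_tree} for stars of order $j+1$ for every $j\ge 3$, but that theorem is established only for order at least $9$, and for $j\le 5$ the base-case inequality is false or non-strict, as the examples show. (Your own reduction of the case $j=3$ to Theorem \ref{Thm:Path_Max_Forest} carries the same hidden size restriction.) So the honest conclusion is twofold: your write-up has a genuine gap of execution, but you have also exposed a gap of hypothesis in the statement itself---the lemma needs a lower bound on the order (at least $7$, in view of the counterexamples above), and only after adding such a hypothesis and checking the finitely many small configurations directly can either induction, the paper's or a completed version of yours, get off the ground.
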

\begin{proof}
We reason by induction on the order $n=n_1+\dots+n_j+1$. If $n=j+1$, then $[P_{n_1},\dots,P_{n_j}]$ is a star and $[P_{n_1+n_2},P_{n_3},\dots,P_{n_j}]$ is not a star. The desired inequality holds by Theorem \ref{Thm:Star_Min_tree}.

Suppose that the inequality $Z_1([P_{n_1},\dots,P_{n_j}])<Z_1([P_{n_1+n_2},P_{n_3},\dots,P_{n_j}])$ holds whenever $n=n_1+\dots+n_j+1=k$ for some $k\geq j+1$. Now consider the case where $n=k+1\geq j+2$ Then $n_1\geq 2$. 

Suppose that $n_1>2$. Using Lemma \ref{Lem:Rec}, we have
\begin{align*}
Z_1([P_{n_1},\dots,P_{n_j}])
&=Z_1([P_{n_1-1},\dots,P_{n_j}])
+Z_1([P_{n_1-2},\dots,P_{n_j}])
+Z_0([P_{n_1-3},\dots,P_{n_j}])
\end{align*}
and
\begin{align}
\label{Eq:Z1PComb}
Z_1([P_{n_1+n_2},P_{n_3},\dots,P_{n_j}])
&=Z_1([P_{n_1+n_2-1},P_{n_3},\dots,P_{n_j}])
+Z_1([P_{n_1+n_2-2},P_{n_3},\dots,P_{n_j}])\nonumber\\
&+Z_0([P_{n_1+n_2-3},P_{n_3},\dots,P_{n_j}]).
\end{align}

By Lemma \ref{Lem:Z0Ironing}, we know that 
$$
Z_0([P_{n_1+n_2-3},P_{n_3},\dots,P_{n_j}])>Z_0([P_{n_1-3},P_{n_2-3},\dots,P_{n_j}]).
$$
By the induction assumption, we know that 
$$
Z_1([P_{n_1-1},\dots,P_{n_j}])< Z_1([P_{n_1+n_2-1},P_{n_3},\dots,P_{n_j}])
$$
and
$$
Z_1([P_{n_1-2},\dots,P_{n_j}])< Z_1([P_{n_1+n_2-2},P_{n_3},\dots,P_{n_j}]).
$$
Hence, we have
$$
Z_1([P_{n_1},\dots,P_{n_j}])< Z_1([P_{n_1+n_2},P_{n_3},\dots,P_{n_j}])
$$
as we aimed to prove.

Now suppose that $n_1=2$. \eqref{Eq:Z1PComb} still holds, while
\begin{align*}
Z_1([P_{n_1},\dots,P_{n_j}])
&=Z_1([P_{n_1-1},\dots,P_{n_j}])
+Z_1([P_{n_1-2},\dots,P_{n_j}])
+Z_0\left(\bigcup_{i=2}^jP_{n_i}\right).
\end{align*}
Note that $n_1-2=0,n_1+n_2-2=n_2$ and hence 
$$[P_{n_1-2},\dots,P_{n_j}]=[P_{n_2},\dots,P_{n_j}]=[P_{n_1+n_2-2},\dots,P_{n_j}].$$ Moreover, $\bigcup_{i=2}^jP_{n_i}$ can be obtained from $[P_{n_1+n_2-3},P_{n_3},\dots,P_{n_j}]=[P_{n_2-1},P_{n_3},\dots,P_{n_j}]$ by removing all edges incident to the branching vertex except the one connecting it to $P_{n_2}$. Thus we have
$$
Z_0([P_{n_1+n_2-3},P_{n_3},\dots,P_{n_j}])>Z_0\left(\bigcup_{i=2}^jP_{n_i}\right).
$$
By the induction assumption, we know that 
$$
Z_1([P_{n_1-1},\dots,P_{n_j}])< Z_1([P_{n_1+n_2-1},P_{n_3},\dots,P_{n_j}])
$$
and
$$
Z_1([P_{n_1-2},\dots,P_{n_j}])= Z_1([P_{n_1+n_2-2},P_{n_3},\dots,P_{n_j}]).
$$
Hence, we again have
$$
Z_1([P_{n_1},\dots,P_{n_j}])< Z_1([P_{n_1+n_2},P_{n_3},\dots,P_{n_j}])
$$
as desired.
\end{proof}

The following theorem follows immediately from Lemmas \ref{Lem:Z1Ironing} and \ref{Lem:RedBrancNumb}. It reduces the set of candidates to that of tripods.

\begin{thm}
If a tree $T$ has order $n\geq 4$, and for any $n$-vertex tree $H$, we have $Z_1(P_n)>Z_1(T)\geq Z_1(H)$, then
$$
T\in \{[P_{n_1},P_{n_2},P_{n_3}]:n_1+n_2+n_3=n-1\}.
$$
\end{thm}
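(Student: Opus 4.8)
The plan is to read the hypothesis as identifying $T$ as the tree of \emph{second-largest} $Z_1$ among all $n$-vertex trees: the inequality $Z_1(P_n)>Z_1(T)$ says $T\neq P_n$, and $Z_1(T)\geq Z_1(H)$ for every $n$-vertex tree $H$ (the path being the sole exception) says that $T$ maximises $Z_1$ over all non-path $n$-vertex trees. I would then argue by contradiction: if $T$ fails to be a tripod, one of the two ironing lemmas produces an $n$-vertex tree $T'$ that is still not a path yet has $Z_1(T')>Z_1(T)$, contradicting this maximality. The observation that makes each step go through is that both operations preserve a vertex of degree at least $3$, so the modified tree is structurally distinct from $P_n$ and therefore belongs to the class that $T$ is supposed to dominate.

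First I would reduce to star-like trees. If $T$ is not star-like, it has at least two vertices of degree at least $3$; rooting $T$ at one of them, call it $v$, at least one branch $T_1$ of $v$ contains the second such vertex and hence is not a path rooted at an end-vertex. Applying Lemma~\ref{Lem:Z1Ironing} to this branch yields $T'$, obtained by replacing $T_1$ by $P_{|T_1|}$, with $Z_1(T')>Z_1(T)$. Straightening a single branch does not change $\deg_G(v)$, so $v$ still has degree at least $3$ in $T'$; thus $T'\neq P_n$, and the maximality of $T$ is contradicted. Hence $T$ is star-like, say $T=[P_{n_1},\dots,P_{n_j}]$ with $n_1+\dots+n_j=n-1$.

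Next I would pin down $j$. If $j\leq 2$ then $[P_{n_1},\dots,P_{n_j}]$ has no vertex of degree exceeding $2$ and is therefore the path $P_n$, contradicting $Z_1(T)<Z_1(P_n)$. If $j\geq 4$, then Lemma~\ref{Lem:RedBrancNumb} merges two branches to produce $T'=[P_{n_1+n_2},P_{n_3},\dots,P_{n_j}]$, a tree with $j-1\geq 3$ branches and $Z_1(T')>Z_1(T)$; since $T'$ still has a vertex of degree at least $3$ it is not $P_n$, again contradicting maximality. The only remaining possibility is $j=3$, that is $T=[P_{n_1},P_{n_2},P_{n_3}]$ with $n_1+n_2+n_3=n-1$, which is precisely a tripod.

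Once the two ironing lemmas are granted, the argument is essentially combinatorial bookkeeping and I do not expect a serious obstacle. The one point deserving explicit care is ensuring that every modification keeps the tree off $P_n$ (so that the strict increase in $Z_1$ genuinely contradicts second-largest maximality rather than merely climbing toward the path); I would handle this uniformly by tracking the branch vertex of degree at least $3$, which neither operation removes.
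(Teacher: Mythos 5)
Your proof is correct and takes essentially the same route as the paper, which proves this theorem simply by declaring that it ``follows immediately'' from Lemmas~\ref{Lem:Z1Ironing} and~\ref{Lem:RedBrancNumb}; your write-up just makes that deduction explicit. The one detail you add beyond the paper --- tracking a vertex of degree at least $3$ through each modification so that the improved tree is never $P_n$ and the contradiction with second-largest maximality is genuine --- is exactly the right bookkeeping, so there is no gap.
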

Tables \ref{table:trees-of-order-9} and \ref{table:trees-of-order-10} suggest that the $n$-vertex trees with second-largest $Z_1$ is $[P_1,P_1,P_{n-3}]$ for $n=9$, and it is $[P_3,P_3,P_{n-7}]$ for $n=10$. Further computational check showed that we have $[P_1,P_1,P_{n-3}]$ again for $n=11$, but for $12\leq n \leq 20$ we always have $[P_3,P_3,P_{n-7}]$. Hence, the following conjecture.
\begin{conj}
\label{Conj:2nd_Max_Forest}
Among all forest $F\neq P_n$ of order $n\geq 12 $ we have 
$Z_1(F)\leq Z_1([P_3,P_3,P_{n-7}])$, with equality if and only if $F$ is $[P_3,P_3,P_{n-7}]$.
\end{conj}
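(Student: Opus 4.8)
The plan is to reduce the statement to a finite family of one-variable (in $n$) comparisons among tripods, and to settle those by the same Fibonacci-type inductions used in Section~\ref{Sec:Max}. First, because adding an edge never decreases $Z_1$, any forest $F\neq P_n$ can be completed to a tree of the same order without decreasing $Z_1$; when $F$ is disconnected the new edges may be attached to interior vertices, so the completion can be taken to be a tree different from $P_n$. Hence it suffices to prove the bound for trees $T\neq P_n$. By the theorem stated just before the conjecture --- itself a consequence of the ironing Lemma~\ref{Lem:Z1Ironing} and the branch-merging Lemma~\ref{Lem:RedBrancNumb} --- any tree carrying the second-largest value of $Z_1$ is a tripod $[P_{n_1},P_{n_2},P_{n_3}]$ with $n_1+n_2+n_3=n-1$. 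The problem thus becomes: over all such tripods, show that $Z_1$ is maximized uniquely by $n_1=n-7$, $n_2=n_3=3$.

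Next I would record a closed expression for a tripod. Applying Lemma~\ref{Lem:Rec} at the center $v$ (of degree $3$), together with $Z_0(G_1\cup G_2)=Z_0(G_1)Z_0(G_2)$ (Theorem~\ref{thm:sigma-union-is-product-of-sigmas}) and the companion identity $Z_1(G_1\cup G_2)=Z_1(G_1)Z_0(G_2)+Z_0(G_1)Z_1(G_2)$, expresses $Z_1([P_a,P_b,P_c])$ as a fixed symmetric polynomial in $Z_0(P_a),Z_0(P_{a-1}),Z_0(P_{a-2}),Z_1(P_a)$ and the corresponding $b$- and $c$-quantities. The practical consequence is that the difference of $Z_1$ between any two tripods is a finite integer combination of the numbers $Z_0(P_k)$ and $Z_1(P_k)$ at shifted indices, so every comparison can be closed by the recursions $Z_0(P_k)=Z_0(P_{k-1})+Z_0(P_{k-2})$ and $Z_1(P_k)=Z_1(P_{k-1})+Z_1(P_{k-2})+Z_0(P_{k-3})$, exactly in the manner of Lemmas~\ref{Lem:Tech3}, \ref{Lem:Tech4} and \ref{LEm:Tech1}.

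The optimization I would run in two moves. The first is an exchange lemma: for $a\ge b\ge c\ge 1$ with $b\ge 4$,
$$
Z_1([P_a,P_b,P_c])<Z_1([P_{a+1},P_{b-1},P_c]),
$$
that is, moving one unit from a non-longest branch of length at least $4$ onto the longest branch strictly increases $Z_1$. Applying this repeatedly drives both smaller branches down to length at most $3$, so every tripod is dominated by one from the six families $[P_{n-3},P_1,P_1]$, $[P_{n-4},P_2,P_1]$, $[P_{n-5},P_2,P_2]$, $[P_{n-5},P_3,P_1]$, $[P_{n-6},P_3,P_2]$, $[P_{n-7},P_3,P_3]$. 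The second move is to compare these six representatives directly and to show that, for $n\ge 12$, the last one strictly dominates the other five; together with the first move this gives uniqueness as well. Each of the exchange inequality and the five head-to-head comparisons is a Fibonacci-type inequality, to be proved by induction on $n$ from explicitly computed base cases.

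The main obstacle is the threshold $n\ge 12$, which lives entirely in the five head-to-head comparisons, and in particular in $[P_{n-7},P_3,P_3]$ versus $[P_{n-3},P_1,P_1]$ (and versus $[P_{n-5},P_3,P_1]$). The data reported before the conjecture show that $[P_{n-3},P_1,P_1]$ actually wins at $n=9$ and $n=11$ while $[P_{n-7},P_3,P_3]$ wins at $n=10$ and for all $n\ge 12$, so the relevant differences change sign for small $n$ and the margin is small and non-monotone near the threshold. Such inequalities cannot be obtained from crude estimates; I would evaluate the differences exactly through the Binet forms of $Z_0(P_k)$ (Theorem~\ref{thm:sigma-of-a-path-by-eric}) and of $Z_1(P_k)$ recorded above, isolating the dominant $\alpha^{\,k}$ contribution to locate the sign change precisely at $n=12$, and then verify $n=12,13$ (and any further small cases the argument needs) by direct computation before the induction takes over. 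The exchange lemma, being a clean two-parameter induction whose sign is never in doubt for the branch lengths involved, I expect to be the routine part; pinning the threshold exactly where the computations place it is the delicate step.
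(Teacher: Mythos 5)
First, a point of order: the statement you were asked to prove is Conjecture~\ref{Conj:2nd_Max_Forest}, which the paper does \emph{not} prove. The authors support it only by computer checks for $12\le n\le 20$ and explicitly remark that a proof is elusive, precisely because $[P_3,P_3,P_{n-7}]$ is not the tripod maximising $Z_0$. So there is no proof in the paper to compare yours against; the only question is whether your argument settles the conjecture, and it does not. The parts of your plan that are actually established --- completing a forest to a tree without decreasing $Z_1$, and the reduction to tripods via Lemmas~\ref{Lem:Z1Ironing} and~\ref{Lem:RedBrancNumb} --- are already in the paper. The two new ingredients, namely your exchange lemma ($Z_1([P_a,P_b,P_c])<Z_1([P_{a+1},P_{b-1},P_c])$ for $a\ge b\ge 4$) and the five head-to-head comparisons with the exact threshold $n\ge 12$, \emph{are} the content of the conjecture, and they are asserted, not proved.

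The deeper problem is your expectation that these are ``Fibonacci-type inequalities'' provable ``by the same inductions'' as Lemmas~\ref{LEm:Tech1}, \ref{Lem:Tech3} and~\ref{Lem:Tech4}: those inductions succeed because every summand in the inductive step separately has the right sign, and that structure breaks down here. Recursion at a leaf of the long branch gives $Z_1([P_a,P_b,P_c])=Z_1([P_{a-1},P_b,P_c])+Z_1([P_{a-2},P_b,P_c])+Z_0([P_{a-3},P_b,P_c])$, so every inductive comparison injects $Z_0$-comparisons of tripods, and these are not sign-definite in your favour. Two concrete failures. (i) For the exchange lemma, the step $(4,4,1)\to(5,3,1)$ reduces, after the leading terms cancel (they are the same tree $[P_4,P_3,P_1]$), to the inequality $Z_1([P_3,P_3,P_1])>Z_1([P_4,P_2,P_1])$ (the values are $38$ and $35$), which is a donor-of-length-$3$ exchange in the \emph{opposite} direction and hence outside the scope of the lemma being proved; recursing at the donor branch instead produces the bracket $Z_1([P_5,P_2,P_1])-Z_1([P_4,P_3,P_1])=66-68<0$. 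Either way the induction hypothesis does not cover the terms the induction generates; indeed $Z_1$ along a line $a+b=\mathrm{const}$ oscillates (at $n=10$: $Z_1=121,125,120$ for $(4,4,1),(5,3,1),(6,2,1)$, with $Z_1([P_7,P_1,P_1])=125$ tying $(5,3,1)$), as does $Z_0$ ($80,79,81$ for the same three trees). (ii) For the comparison of $[P_{n-7},P_3,P_3]$ against $[P_{n-5},P_2,P_2]$ --- the latter being the $Z_0$-maximal tripod --- the difference $D_n$ satisfies $D_n=D_{n-1}+D_{n-2}+\bigl(Z_0([P_{n-10},P_3,P_3])-Z_0([P_{n-8},P_2,P_2])\bigr)$, and the bracket is strictly \emph{negative} (e.g.\ $81-84=-3$ at $n=13$), so positivity of $D_{n-1}$ and $D_{n-2}$ does not imply positivity of $D_n$. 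Closing either step therefore needs exact quantitative control of how much the positive terms dominate the negative ones --- the Binet-form computation you explicitly defer, together with the delicate verification that the sign change sits at $n=12$ and nowhere later. That deferred computation is the entire difficulty; as written, your proposal is a sensible programme of attack, but the conjecture remains open.
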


The fact that $[P_3,P_3,P_{n-7}]$ is not the tripod with largest $Z_0$ \cite{TEric} is part of the reason why proving Conjecture \ref{Conj:2nd_Max_Forest} is challenging. 

\newpage

\section{Appendix}

In this appendix, we exhaustively compute $Z_1(T)$, where $T$ is any tree of order $n$, where
$9 \le n \le 10$. 
The trees of order $n=9$ are in Table \ref{table:trees-of-order-9}.  Those of order $n=10$, are in in Table \ref{table:trees-of-order-10}.

 \begin{table}[!h]
   \centering

   \caption{$Z_1$ of trees of order $n=10$.}
   \label{table:trees-of-order-10}
 \end{table}


\bibliographystyle{abbrv} 
\bibliography{references}

\begin{thebibliography}{10}

\bibitem{andriantiana2010number}
E.~O.~D. Andriantiana.
\newblock {\em The number of independent subsets and the energy of trees}.
\newblock PhD thesis, Stellenbosch: University of Stellenbosch, 2010.

\bibitem{TEric}
E.~O.~D. Andriantiana.
\newblock More trees with large energy.
\newblock {\em MATCH Commun. Math. Comput. Chem.}, 68:675--695, 2012.

\bibitem{andriantiana2013energy}
E.~O.~D. Andriantiana.
\newblock Energy, hosoya index and merrifield--simmons index of trees with
  prescribed degree sequence.
\newblock {\em Discrete Applied Mathematics}, 161(6):724--741, 2013.

\bibitem{andriantiana2023number}
E.~O.~D. Andriantiana and Z.~B. Shozi.
\newblock The number of $1 $-nearly independent vertex subsets.
\newblock {\em arXiv preprint arXiv:2309.05356}, 2023.

\bibitem{gao1988topological}
Y.-d. Gao and H.~Hosoya.
\newblock Topological index and thermodynamic properties. iv. size dependency
  of the structure-activity correlation of alkanes.
\newblock {\em Bulletin of the Chemical Society of Japan}, 61(9):3093--3102,
  1988.

\bibitem{Gutman1977AcyclicSW}
I.~Gutman.
\newblock Acyclic systems with extremal h{\"u}ckel $\pi$-electron energy.
\newblock {\em Theoretica chimica acta}, 45:79--87, 1977.

\bibitem{gutman2004concealed}
I.~Gutman, B.~Furtula, D.~Vidovic, and H.~Hosoya.
\newblock A concealed property of the topological index z.
\newblock {\em Bulletin of the Chemical Society of Japan}, 77(3):491--496,
  2004.

\bibitem{GUTMAN20122177}
I.~Gutman and S.~Wagner.
\newblock The matching energy of a graph.
\newblock {\em Discrete Applied Mathematics}, 160(15):2177--2187, 2012.

\bibitem{gutman1976topological}
I.~Gutman, T.~Yamaguchi, and H.~Hosoya.
\newblock Topological index as applied to $\pi$-electronic systems. iv. on the
  topological factors causing non-uniform $\pi$-electron charge distribution in
  non-alternant hydrocarbons.
\newblock {\em Bulletin of the Chemical Society of Japan}, 49(7):1811--1816,
  1976.

\bibitem{henning2013total}
M.~Henning and A.~Yeo.
\newblock {\em Total Domination in Graphs}.
\newblock Springer Monographs in Mathematics. Springer New York, 2014.

\bibitem{hosoya1971topological}
H.~Hosoya.
\newblock Topological index. a newly proposed quantity characterizing the
  topological nature of structural isomers of saturated hydrocarbons.
\newblock {\em Bulletin of the Chemical Society of Japan}, 44(9):2332--2339,
  1971.

\bibitem{hosoya1972graphical}
H.~Hosoya.
\newblock Graphical enumeration of the coefficients of the secular polynomials
  of the h{\"u}ckel molecular orbitals.
\newblock {\em Theoretica chimica acta}, 25(3):215--222, 1972.

\bibitem{hosoya1976topological}
H.~Hosoya and K.~Hosoi.
\newblock Topological index as applied to $\pi$-electronic systems. iii.
  mathematical relations among various bond orders.
\newblock {\em The Journal of Chemical Physics}, 64(3):1065--1073, 1976.

\bibitem{hosoya1975topological}
H.~Hosoya, K.~Hosoi, and I.~Gutman.
\newblock A topological index for the total $\pi$-electron energy: Proof of a
  generalised h{\"u}ckel rule for an arbitrary network.
\newblock {\em Theoretica chimica acta}, 38(1):37--47, 1975.

\bibitem{wagner2010maxima}
S.~Wagner and I.~Gutman.
\newblock Maxima and minima of the hosoya index and the merrifield-simmons
  index: a survey of results and techniques.
\newblock {\em Acta Applicandae Mathematicae}, 112:323--346, 2010.

\end{thebibliography}

\end{document}